\newtheorem{theorem}{Theorem}[section]
\newtheorem{proposition}[theorem]{Proposition}
\newtheorem{lemma}[theorem]{Lemma}
\newtheorem{definition}[theorem]{Definition}
\newtheorem{corollary}[theorem]{Corollary}
\newtheorem{remark}{Remark}[section]
\numberwithin{equation}{section}
\newcommand{\ZZ}{\mathbb{Z}}
\newcommand{\Z}{\mathbb{Z}}
\newcommand{\RR}{\mathbb{R}}
\newcommand{\NN}{\mathbb{N}}
\newcommand{\R}{\mathbb{R}}
\newcommand{\calm}{\mathcal{M}}
\newcommand{\calL}{\mathcal{L}}
\newcommand{\Div}{{\mathrm{Div}}}
\title{Keller properties for integer tilings} 
\author{Benjamin Bruce and Izabella {\L}aba}
\date{\today}
\begin{document}

\begin{abstract}
Keller's conjecture on cube tilings asserted that, in any tiling of $\RR^d$ by unit cubes, there must exist two cubes that share a $(d-1)$-dimensional face. This is now known to be true in dimensions $d\leq 7$ and false for $d\geq 8$. In this article, we investigate  analogues of Keller's conjecture for integer tilings.

\end{abstract}

\maketitle



\section{Introduction}


\subsection{Motivation}
Let $A\subset\ZZ$ be a finite and nonempty set. 
We say that $A$ {\em tiles the integers by translations} if 
there exists a translation set $T\subset\ZZ$ such that every integer $n\in\ZZ$ can be written uniquely as $n=a+t$ with $a\in A$ and $t\in T$. Informally, $\ZZ$ can be covered by pairwise disjoint translates of $A$. We will refer to such $A$ as an {\it integer tile}. 

It is well known \cite{New} that any tiling of $\ZZ$ by a finite set $A$ must be periodic, so that there exists an $M\in\NN$ and a finite set $B\subset \ZZ$ such that $T=B\oplus M\ZZ$. Thus $A\oplus B\oplus M\ZZ=\ZZ$; in other words, $A\oplus B$ mod $M$ is a factorization of the cyclic group $\ZZ_M$. We will write this as $A\oplus B=\ZZ_M$. Since translating an element of $A$ or $B$ by a multiple of $M$ does not change that property, we will consider $A,B$ as subsets of $\ZZ_M$.

The tiling condition $A\oplus B=\ZZ_M$
has an equivalent formulation in terms of the {\em divisor sets} of $A$ and $B$. 
Fix $M\in\NN$; given two integers $m,n\in\ZZ$, we will use $(m,n)$ to denote their greatest common divisor. For a finite set $A\subset\ZZ$, we define the divisor set of $A$ relative to $M$:
\begin{equation}\label{def-div}
\Div(A)=\{(a-a',M):\ a,a'\in A\}.
\end{equation}
Informally, we refer to the elements of $\Div(A)$ as the {\em divisors} of $A$.

\begin{theorem}\label{thm-sands} {\bf(Sands)}
Let $A,B\subset\ZZ_M$ be sets. Then  $ A\oplus B=\ZZ_M$ is a tiling if and only if
\begin{equation}\label{e-sands}
|A|\,| B|=M\hbox{ and }\Div( A) \cap \Div( B) =\{M\}.
\end{equation}
\end{theorem}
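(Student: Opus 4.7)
The $(\Leftarrow)$ direction is elementary. Assume $|A||B|=M$ and $\Div(A)\cap\Div(B)=\{M\}$. If $a+b\equiv a'+b'\pmod M$ with $(a,b)\ne(a',b')$ in $A\times B$, then $a-a'\equiv b'-b\pmod M$ is a nonzero element of $\ZZ_M$; setting $d:=(a-a',M)$ we have $d<M$ and $d\in\Div(A)\cap\Div(B)$, contradicting the hypothesis. Hence the $|A||B|=M$ sums $a+b$ are pairwise distinct in $\ZZ_M$, so they exhaust $\ZZ_M$.

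For the $(\Rightarrow)$ direction the cardinality equality is immediate; the content lies in proving the divisor condition. I plan to use the mask-polynomial reformulation: writing $A(x)=\sum_{a\in A}x^a$ and likewise $B(x)$, $A\oplus B=\ZZ_M$ is equivalent to $A(x)B(x)\equiv 1+x+\cdots+x^{M-1}\pmod{x^M-1}$, which by evaluation at $M$-th roots of unity is equivalent to $|A||B|=M$ together with $A(\zeta_d)B(\zeta_d)=0$ for every $d\mid M$ with $d>1$, where $\zeta_d$ denotes a primitive $d$-th root of unity. Since $A,B\in\ZZ[x]$ and $\Phi_d$ is the minimal polynomial of $\zeta_d$, for each such $d$ either $\Phi_d\mid A(x)$ or $\Phi_d\mid B(x)$.

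The crucial step is the following key lemma: \emph{if $A\oplus B=\ZZ_M$ and $u\in(\ZZ_M)^*$, then $A\oplus uB=\ZZ_M$ as well}. Note $uB$ has mask polynomial $B(x^u)\bmod(x^M-1)$. For each $d\mid M$ with $d>1$: either $A(\zeta_d)=0$, giving $A(\zeta_d)B(\zeta_d^u)=0$; or $\Phi_d\mid B(x)$, in which case $\zeta_d^u$ is still a primitive $d$-th root of unity (because $u$ reduces to a unit modulo $d$), so $B(\zeta_d^u)=0$. Thus $A(x)B(x^u)$ and $1+x+\cdots+x^{M-1}$ agree at all $M$-th roots of unity and coincide modulo $x^M-1$.

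To finish, suppose toward contradiction that $d\in\Div(A)\cap\Div(B)$ with $d<M$; pick $a,a'\in A$ and $b,b'\in B$ with $a-a'\equiv dv_A$ and $b-b'\equiv dv_B\pmod M$ for units $v_A,v_B\in(\ZZ_{M/d})^*$. Using the standard surjection $(\ZZ_M)^*\twoheadrightarrow(\ZZ_{M/d})^*$ (a prime-by-prime consequence of CRT and the easy lift of units from $\ZZ_{p^{e-f}}$ to $\ZZ_{p^e}$), lift $v_A^{-1}$ and $v_B^{-1}$ to units $u_1,u_2\in(\ZZ_M)^*$, and apply the key lemma twice to get the tiling $u_1A\oplus u_2B=\ZZ_M$. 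Then $u_1a-u_1a'\equiv d$ and $u_2b-u_2b'\equiv d\pmod M$, placing the nonzero element $d$ in both $u_1A-u_1A$ and $u_2B-u_2B$; this contradicts the disjoint-differences observation from the $(\Leftarrow)$ direction applied to the new tiling. The main obstacle is the key lemma, which genuinely uses the cyclotomic-factor structure of mask polynomials; the remainder is an elementary reduction via multiplication by units.
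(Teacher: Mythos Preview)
The paper does not supply a proof of Theorem~\ref{thm-sands}; it is stated as a known result of Sands and invoked as a tool throughout, so there is no proof in the paper to compare your attempt against.

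Your argument is correct. The $(\Leftarrow)$ direction is routine. For $(\Rightarrow)$, your key lemma is precisely the dilation lemma (often attributed to Tijdeman; it appears for instance in Coven--Meyerowitz), and your cyclotomic-divisor proof of it is the standard one: for each $d\mid M$ with $d>1$, irreducibility of $\Phi_d$ forces $\Phi_d\mid A$ or $\Phi_d\mid B$, and in the latter case $B$ vanishes at $\zeta_d^{u}$ since $u$ is a unit mod $d$. The final reduction is also sound: writing $a-a'\equiv dv_A$ with $v_A\in(\ZZ_{M/d})^*$, lifting $v_A^{-1}$ to $u_1\in(\ZZ_M)^*$ via the surjection $(\ZZ_M)^*\twoheadrightarrow(\ZZ_{M/d})^*$, one has $u_1 v_A\equiv 1\pmod{M/d}$ and hence $u_1(a-a')\equiv d\pmod M$ exactly as needed; two applications of the key lemma then give the tiling $u_1A\oplus u_2B=\ZZ_M$, in which the common difference $d$ produces a collision. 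One cosmetic point: in the $(\Leftarrow)$ paragraph you should make explicit that $(a,b)\ne(a',b')$ together with $a+b\equiv a'+b'$ forces \emph{both} $a\ne a'$ and $b\ne b'$, which is what guarantees $d<M$.
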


Thus, if $A\oplus B=\ZZ_M$, (\ref{e-sands}) says that each $m|M$ with $m\neq M$ may belong to at most one of $\Div(A)$ and $\Div(B)$. It does not, however, specify which divisors must actually occur in $\Div(A)\cup\Div(B)$, nor does it say how they might be distributed between these two sets.

In this paper, we investigate the following question: must every tiling $A\oplus B=\ZZ_M$ satisfy
\begin{equation}\label{Mp-question}
M/p \in \Div(A)\cup \Div (B)\hbox{ for some prime }p|M?
\end{equation}
 While we are not able to resolve the problem in its full generality, we do have both positive and negative results in this direction. 

Our interest in the above question is motivated by several considerations. One is to advance the understanding of the general structure of integer tilings. In this regard, the main open problem is the Coven-Meyerowitz conjecture \cite{CM}, which we now describe briefly (see Section \ref{sec-CM} for more details). For finite sets $A,B\subset\NN\cup\{0\}$, we define their {\it mask polynomials}
$$
A(X)=\sum_{a\in A}X^a,\ B(X)=\sum_{b\in B}X^b .
$$
Then $A\oplus B=\ZZ_M$ is equivalent to
\begin{equation}\label{mask-e1}
 A(X)B(X)=1+X+\dots+X^{M-1}\ \mod (X^M-1).
\end{equation}
This can be rephrased further in terms of the cyclotomic divisors of $A(X)$ and $B(X)$.
Recall that the $s$-th cyclotomic
polynomial $\Phi_s(X)$ is the unique monic, irreducible polynomial whose roots are the
primitive $s$-th roots of unity. Equivalently, $\Phi_s$ can be computed from
the identity $X^n-1=\prod_{s|n}\Phi_s(X)$.
Then we may rewrite (\ref{mask-e1}) as
\begin{equation}\label{mask-e2}
  |A||B|=M\hbox{ and }\Phi_s(X)\, |\, A(X)B(X)\hbox{ for all }s|M,\ s\neq 1.
\end{equation}
Since $\Phi_s$ are irreducible, each $\Phi_s(X)$ with $s|M$ and $s\neq 1$ must divide at least one of $A(X)$ and $B(X)$.
Coven and Meyerowitz \cite{CM} proposed conditions on how these cyclotomic divisors may be distributed between $A(X)$ and $B(X)$; the statement that all integer tiles must satisfy these conditions has become known as the Coven-Meyerowitz conjecture. The conjecture allows an equivalent statement in terms of the divisor sets defined in (\ref{def-div}) \cite[Proposition 3.4]{LL1}. It has been proved in certain significant special cases \cite{CM, LL2, LL3}, but remains open in general and appears to be very difficult to resolve.

The recent papers \cite{LL1,LL2,LL3} established a link between the Coven-Meyerowitz conjecture and structural properties such as (\ref{Mp-question}). Specifically, the approach in \cite{LL2, LL3} depends on being able to find certain ``fibered" structures in one or both of the tiles. This, in particular, requires a stronger version of (\ref{Mp-question}) to hold. Our main negative result, Theorem \ref{thm1}, provides examples of integer tilings where such structures do not appear in either tile. This limits the applicability of some of the methods of \cite{LL2,LL3} to tilings where $M$ has a small number of distinct prime factors. 
Conversely, if properties such as (\ref{Mp-question}) can be established, they could provide partial structural information in cases where the full conjecture is not available. We discuss this in more detail in Section \ref{sec-CM}.

More generally, one can ask ``how complicated can integer tilings really get?''. 
Given the existing significant body of work on high-dimensional tilings with counterintuitive properties, a natural direction of research is to try to use such examples to construct ``pathological" integer tilings. This connection can provide useful geometrical insights into properties of integer tilings that might otherwise be difficult to visualize. For instance, the examples in \cite{LS}, \cite{Sz} (disproving a conjecture of Sands \cite{Sands-Hajos} on factorization of finite abelian groups) have a natural interpretation in terms of 3-dimensional cube tilings with ``shifted columns''. In \cite{LL1,LL2,LL3}, this geometrical interpretation played a significant role in the classification of tilings of $\ZZ_{p^2q^2r^2}$, where $p,q,r$ are distinct primes. In \cite{Kol, Steinberger2}, it was used to construct integer tilings with long periods.

Keller's conjecture on cube tilings (see Section \ref{sec-Kellerforcubes} for the relevant background) stated that in any tiling of $\RR^d$ by translates of the unit cube, there must be two cubes that share a full $(d-1)$-dimensional face.
The counterexamples found in \cite{LShor1, Mackey} provide an important class of counterintuitive tilings in high dimensions. It is natural to ask whether such phenomena have meaningful counterparts in the setting of integer tilings. 
Our question (\ref{Mp-question}) is a natural analogue of Keller's face-sharing property. 
However, it turns out that the counterexamples in \cite{LShor1, Mackey} are not easily adapted; we are only able to use them to disprove a stronger property than (\ref{Mp-question}). 

The tilings we construct are known to satisfy the Coven-Meyerowitz tiling conditions (see Section \ref{sec-CM}). On one hand, this means that such constructions cannot provide a counterexample to the Coven-Meyerowitz conjecture without further significant new ideas. On the other hand, it also shows that integer tilings may have rather complicated structure even when the Coven-Meyerowitz conditions are known to hold.


\subsection{Keller's conjecture for cube tilings}\label{sec-Kellerforcubes}

Let $Q=[0,1)^d$ be the unit cube in $\RR^d$. For the purpose of this paper, a {\it cube tiling} will always mean a tiling of $\RR^d$ by congruent and pairwise disjoint translates of $Q$. Consider the following {\it Keller properties} that a cube tiling $T\oplus Q =\RR^d$ might have. For $i\in\{1,\dots,d\}$, we use $e_i$ to denote the unit vector in the $i$-th direction.

\medskip\noindent
{\bf (KP1)}  There exist $t,t'\in T$ such that $t-t'=e_i$ for some $i\in\{1,\dots,d\}$.

\medskip\noindent
{\bf (KP2)} There exist $t\in T$ and $i\in\{1,\dots,d\}$ such that 
$\{t+ne_i:\ n\in\ZZ\}\subset T$. 

\medskip

The first property states that there are two cubes in the given tiling that share a full $(d-1)$-dimensional face. The second property makes the stronger assertion that the tiling must in fact contain an infinite ``column'' of cubes sharing full $(d-1)$-dimensional faces.

Keller \cite{Keller} conjectured in 1930 that all cube tilings must satisfy (KP2), hence also (KP1). 
The (formally weaker) statement that all cube tilings must satisfy (KP1) has become known in the literature as 
{\it Keller's conjecture}. This is now known to be true in low dimensions but false in general. Perron \cite{Perron1,Perron2} proved that (KP1) holds for all cube tilings of $\RR^d$ with $d\leq 6$. 
The stronger statement that (KP2) holds for all cube tilings in dimensions $d\leq 6$ was proved by {\L}ysakowska and Przes{\l}awski \cite{LP1,LP2}. Brakensiek, Heule, Mackey, and Narv\'aez \cite{BHMN} proved that
(KP1) holds for all unit cube tilings of $\RR^7$. 
In the other direction, 
Lagarias and Shor \cite{LShor1} constructed cube tilings in dimensions $d\geq 10$ that do not have the property (KP1) and, therefore, (KP2). Mackey \cite{Mackey} extended this to dimensions $d\geq 8$.


\subsection{Integer Keller properties}

Assume that $M=\prod_{i=1}^d p_i^{n_i}$,
where $p_1,\dots,p_d$ are distinct primes and $n_1,\dots,n_d\in\NN$. 
Let $A\oplus  B=\ZZ_M$ be a tiling. 
Define the divisor sets $\Div(A)$, $\Div(B)$ as in (\ref{def-div}), and let
 \begin{equation}\label{fiber} 
F_i:= \{0,M/p_i,2M/p_i,\dots,(p_i-1)M/p_i\}\subset\ZZ_M \hbox{ for }i=1,2,\dots,d.
\end{equation}
In \cite{LL1, LL2, LL3}, a translate (coset) of $F_i$ is called an {\it $M$-fiber in the $p_i$ direction.}
Consider the following ``integer Keller properties'' that the tiling might have.

\medskip\noindent
{\bf  (IKP1)} There exists an $i\in\{1,\dots,d\}$ such that $M/p_i\in\Div( A)\cup\Div(B)$.

\medskip\noindent
{\bf (IKP2)} There exist $u\in A$ and $i\in\{1,\dots,d\}$ such that $u+F_i \subset  A$.

\medskip

We will also consider the ``cyclotomic Keller property" below. Unlike (IKP1) and (IKP2), the statement (CKP) concerns sets $A\subset \ZZ_M$ that need not be tiles.

\medskip\noindent
{\bf (CKP)} For every nonempty set $A\subset\ZZ_M$ such that $\Phi_M(X)|A(X)$, there exists an $i\in\{1,\dots,d\}$ such that $M/p_i\in\Div( A)$.

\medskip

Clearly, (IKP2) is a stronger statement than (IKP1). Furthermore, if $A\oplus B=\ZZ_M$ is a tiling, then by (\ref{mask-e2}) $\Phi_M(X)$ divides at least one of $A(X)$ and $B(X)$; hence if (CKP) holds for some $M$, then (IKP1) holds for all tilings of $\ZZ_M$ with that $M$. On the other hand, the failure of (CKP) would not necessarily imply the failure of (IKP1), since there exist nonempty sets $A\subset\ZZ_M$ that satisfy $\Phi_M(X)|A(X)$ but do not tile $\ZZ_M$. We also note that $\Phi_M(X)|A(X)$ does not imply that $u+F_i \subset  A$ for any $u$ or $i$ (hence there is no (CKP2) property).
See e.g.~\cite{PR} for an extensive family of examples.

The geometric interpretation of the above statements is as follows. By the Chinese Remainder Theorem, we have
\begin{equation}\label{CRT}
\ZZ_M=\bigoplus_{i=1}^d \ZZ_{p_i^{n_i}}\, .
\end{equation}
This represents $\ZZ_M$ as a $d$-dimensional lattice, with each cardinal direction corresponding to a prime divisor $p_i$ of $M$, and $p_i^{n_i}$-periodic in each such direction (see Section \ref{subsec-convert} for more details). Then (IKP1) states that at least one of the sets $A,B$ in the given tiling, say $A$, contains two elements $a,a'$ such that $a-a'$ is one of the ``cardinal differences" $M/p_i,2M/p_i,\dots,(p_i-1)M/p_i$ in some direction. The stronger property (IKP2) states that at least one of $A,B$ contains an entire fiber in some direction. Thus (IKP1) and (IKP) can be viewed as the integer counterparts of the properties (KP1) and (KP2) for cube tilings.


\subsection{Results}

The structure of tilings whose period $M$ has at most 3 distinct prime factors is understood well enough to provide the partial results in Theorem \ref{thm-lowdim} below. These results are either stated explicitly in the literature, or else they follow directly from known arguments; we provide the details in Section \ref{sec-proof-thm-lowdim}.

\begin{theorem}\label{thm-lowdim}\cite{deB, LamLeung, LL2, LL3}
Let $M=p_1^{n_1}p_2^{n_2}p_3^{n_3}$ with $n_1,n_2,n_3\in\NN\cup\{0\}$, where $p_1,p_2,p_3$ are distinct primes. Then:

\begin{itemize}

\item [(i)] (CKP) holds for $M$; consequently, (IKP1) holds for any tiling $A\oplus B=\ZZ_M$.

\item [(ii)] Assume further that either $n_3=0$ or $\max(n_1,n_2,n_3)\leq 2$. Then (IKP2) holds for any tiling $A\oplus B=\ZZ_M$.
\end{itemize}

\end{theorem}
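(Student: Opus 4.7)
The plan is to reduce each claim to existing structural results on integer tilings and vanishing cyclotomic sums, and apply them case by case.

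For part (i), the implication $(\mathrm{CKP}) \Rightarrow (\mathrm{IKP1})$ for tilings was already noted in the text following the definitions: equation (\ref{mask-e2}) forces $\Phi_M(X) \mid A(X) B(X)$, and since $\Phi_M$ is irreducible, that factor is assigned to one of the two mask polynomials, to which (CKP) then applies. So the task reduces to proving (CKP). When $M$ is a product of at most two prime powers, this is the classical theorem of de Bruijn and Lam--Leung on the structure of vanishing sums of $M$-th roots of unity: any $A \subset \ZZ_M$ with $\Phi_M(X) \mid A(X)$ is a disjoint union of translates of some $F_i$, and in particular $M/p_i \in \Div(A)$. When $M$ has three distinct prime factors, the fibered decomposition can fail in general, but de Bruijn's analysis of vanishing cyclotomic sums in that regime (see also the formulation used in \cite{LL1}) still forces two distinct elements of $A$ to lie in the same coset of $\langle M/p_i \rangle$ for some $i$, which is precisely $M/p_i \in \Div(A)$.

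For part (ii) in the case $n_3 = 0$, $M$ has at most two distinct prime factors, and the de Bruijn--Lam--Leung fibered decomposition from part (i), applied to whichever of $A(X), B(X)$ is divisible by $\Phi_M(X)$, directly exhibits a complete fiber inside that set, giving (IKP2). For the case $\max(n_1, n_2, n_3) \leq 2$, so that $M$ divides $(p_1 p_2 p_3)^2$, the proof invokes the classification of tilings of $\ZZ_{p^2 q^2 r^2}$ developed in \cite{LL2, LL3}. That classification works through a substantial case analysis and establishes in each branch that either $A$ or $B$ contains a complete fiber; extracting (IKP2) is then a matter of reading this conclusion directly off the classification.

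The principal obstacle is the three-prime case of part (ii): the classification in \cite{LL2, LL3} is elaborate, and confirming a fiber in each sub-case requires careful tracking of the structural features guaranteed there. By contrast, part (i) and the two-prime case of part (ii) follow comparatively quickly from the classical theory of vanishing cyclotomic sums.
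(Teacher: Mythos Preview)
Your outline matches the paper's approach: the two-prime cases follow from Lemma~\ref{structure-thm} (the de Bruijn--Lam--Leung nonnegative fiber decomposition), and the three-prime case of (ii) is extracted from the classification in \cite{LL2,LL3}, exactly as you say.

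One point to sharpen: for (CKP) with three prime factors, invoking ``de Bruijn's analysis'' and \cite{LL1} is not enough. De Bruijn's structure theorem gives only an \emph{integer} linear combination of fibers (Proposition~\ref{cuboid}(iii)), and with three primes the coefficients can be negative, so one cannot read off $M/p_i\in\Div(A)$ directly from it. The paper instead assumes $M/p_i\notin\Div(A)$ for some $i$ and invokes \cite[Proposition~6.1]{LL2}, which forces $A\cap\Lambda(0,D(M))$ to contain either a fiber or a ``diagonal boxes'' configuration; either of these visibly produces a pair $a,a'$ with $(a-a',M)=M/p_j$ for some $j$. You should cite that proposition (or reproduce its short argument) rather than de Bruijn's theorem, which does not carry the needed positivity in the three-prime regime.
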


Our new result in this regard is the following theorem.

\begin{theorem}\label{cuboidKeller}
Let $M=p_1^{n_1}\dots p_d^{n_d}$, where $p_1<\dots <p_d$ are distinct primes and $n_1,\dots,n_d \in\NN$.
Assume further that 
\begin{equation}\label{largeprimes}
p_j>2^{j-2}\hbox{ for all }j\in\{6,\dots,d\}.
\end{equation}
Then 
(CKP) holds for $M$. Consequently, (IKP1) holds for any tiling $A\oplus B=\ZZ_M$.

\end{theorem}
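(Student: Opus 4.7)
I would proceed by induction on $d$, the number of distinct prime factors of $M$. The base case $d\le 3$ is Theorem \ref{thm-lowdim}(i). For the inductive step, let $A\subset\ZZ_M$ be a nonempty set with $\Phi_M(X)\mid A(X)$, and suppose, toward a contradiction, that $M/p_i\notin\Div(A)$ for every $i\in\{1,\dots,d\}$. Under the CRT identification $\ZZ_M\cong\prod_i\ZZ_{p_i^{n_i}}$, and letting $H_i=\langle M/p_i\rangle$ denote the order-$p_i$ subgroup in the $i$-th direction, the hypothesis says that $A$ projects injectively onto each quotient $\ZZ_M/H_i$.

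The main technical ingredient is the following full-dimensional $M$-cuboid identity: for any $v_0\in\ZZ_M$ and any tuple $y=(y_1,\dots,y_d)$ with $y_i\in F_i\setminus\{0\}$,
\[
\sum_{\epsilon\in\{0,1\}^d}(-1)^{\epsilon_1+\cdots+\epsilon_d}\,\one_A\!\left(v_0+\sum_{i=1}^d\epsilon_i y_i\right)=0.
\]
This is immediate from Fourier analysis on $\ZZ_M$: writing $\omega=e^{2\pi i/M}$, the Fourier transform of the left side in $v_0$ equals $\widehat{\one_A}(k)\prod_i(1-\omega^{ky_i})$, which vanishes for $k$ coprime to $M$ by $\Phi_M\mid A(X)$, and for every other $k$ because some $p_i\mid k$ while $y_i\in H_i$ forces $\omega^{ky_i}=1$. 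Combined with the injectivity hypothesis --- the two endpoints of each direction-$i$ edge of a cuboid differ by $y_i\in H_i$ and therefore lie in the same $H_i$-coset, so at most one belongs to $A$ --- this forces $A\cap\Delta$ to be a subset of $\{0,1\}^d$ that (a) picks at most one endpoint per coordinate edge and (b) has equal numbers of even- and odd-parity vertices.

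Next I would average this cuboid identity over all $(v_0,y)$, together with the complementary Ramanujan-sum identity $\sum_{d'\mid M}N_{d'}\,c_M(d')=0$ (where $N_{d'}$ counts pairs $(a,a')\in A^2$ with $\gcd(a-a',M)=d'$). The averaging relates $|A|$ to the Keller-type combinatorics of $\{0,1\}^d$ under constraints (a) and (b) with a weight $\prod_i(p_i-1)$ coming from the $y_i$ choices, while the Ramanujan identity, together with the inductive hypothesis applied to slice projections of $A$, controls the counts $N_{d'}$. Together they produce a system of inequalities which should be infeasible for $d\le 5$ unconditionally --- supplying the new cases $d\in\{4,5\}$ not covered by Theorem \ref{thm-lowdim} --- and infeasible for $d\ge 6$ precisely under (\ref{largeprimes}); the factor $2^{j-2}$ reflects the ``doubling per added prime direction'' of the hypercube pattern satisfying (a)--(b).

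\textbf{Main obstacle.} For $d\ge 8$ the pure Keller problem on $\{0,1\}^d$ admits counterexamples (Lagarias--Shor, Mackey), so the argument cannot reduce to face-sharing on the cube alone, and the contribution of the Ramanujan identity becomes essential. Orchestrating the two inputs --- cuboid averaging versus Ramanujan cancellation --- so that the resulting inequality has exactly the doubling threshold of (\ref{largeprimes}) is the delicate step, since the cyclotomic divisibility $\Phi_M\mid A(X)$ does not immediately pass to quotient groups. Identifying the correct inductive refinement (for instance, passing from $\Phi_M$ to a well-chosen product of cyclotomic factors adapted to each projection) is where I expect the real work to lie.
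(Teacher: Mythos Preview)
Your setup is sound: the full-dimensional $M$-cuboid identity (this is Proposition~\ref{cuboid}) and the observation that $M/p_i\notin\Div(A)$ forbids $A$ from meeting both endpoints of any $i$-edge of a cuboid are exactly the right ingredients. But the proposed continuation --- averaging cuboid identities over $(v_0,y)$, invoking Ramanujan sums, and extracting a system of inequalities --- is vague, and your ``main obstacle'' paragraph is effectively an admission that you do not yet have an argument. You are also steering toward a harder problem than the one at hand.

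The gap is in how the induction is used. Do not try to apply (CKP) directly to a projection of $A$ and then average; use it in contrapositive form via the cuboid test to manufacture a \emph{single unbalanced} $(d-1)$-dimensional face. Concretely: take $d$ minimal for which (CKP) fails, fix $0\in A$, set $M_d=M/p_d^{n_d}$, and let $A_0\subset\ZZ_{M_d}$ be the rescaled slice $\{p_d^{-n_d}a:\ a\in A\cap\Lambda(0,M/p_1\cdots p_{d-1})\}$. If $\Phi_{M_d}\mid A_0$, the inductive hypothesis gives $M_d/p_i\in\Div(A_0)$ for some $i<d$, which lifts to $M/p_i\in\Div(A)$, a contradiction. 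Hence $\Phi_{M_d}\nmid A_0$, and Proposition~\ref{cuboid} supplies a $(d-1)$-dimensional partial cuboid $\Delta_0$ with $A[\Delta_0]>0$. Now stack: for $s=1,\dots,p_d-1$ set $\Delta_s=X^{sM/p_d}\Delta_0$; each $\Delta_0-\Delta_s$ is a full $d$-dimensional cuboid, so $A[\Delta_0]-A[\Delta_s]=0$ and hence $A[\Delta_s]>0$. Thus every one of the $p_d$ layers carries an $A$-point among the $2^{d-2}$ plus-vertices of its $\Delta_s$, and since $M/p_d\notin\Div(A)$ no two of these $A$-points lie on the same vertical line. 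Pigeonhole yields $p_d\le 2^{d-2}$, contradicting \eqref{largeprimes} (which, by Remark~\ref{cuboid-remark1}, is automatic for $d\le 5$).

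In particular, the Lagarias--Shor/Mackey constructions are irrelevant here: the argument never reduces to the face-sharing problem on $\{0,1\}^d$, and no averaging or Ramanujan-sum input is needed. The doubling threshold $2^{d-2}$ arises not from Keller combinatorics on the hypercube but simply from the count of plus-vertices of a $(d-1)$-dimensional cuboid.
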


\begin{remark}\label{cuboid-remark1}
The assumption $p_j>2^{j-2}$ is always true for $j\leq 5$, so that if (\ref{largeprimes}) holds, it actually holds for all $1\leq j\leq d$. In particular, (\ref{largeprimes}) holds when all primes are sufficiently large relative to $d$, so that $p_j>2^{d-2}$ for all $j\in\{1,\dots,d\}$. 
\end{remark}

We also note a special case when one of the primes is very large relative to the others.

\begin{theorem}\label{easyKeller}
Let $M=p_1^{n_1}\dots p_d^{n_d}$, where $p_1,\dots ,p_d\,$ are distinct primes and $n_1,\dots,n_d \in\NN$. Let $A\oplus  B=\ZZ_M$ be a tiling. 
Assume further that 
\begin{equation}\label{pd-large}
p_d>\max\Big( (|A|,M/p_d^{n_d}), (|B|,M/p_d^{n_d})\Big).
\end{equation}
Then (IKP1) holds for this tiling.
\end{theorem}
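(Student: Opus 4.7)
The plan is to argue by contradiction. Suppose (IKP1) fails, so $M/p_i \notin \Div(A) \cup \Div(B)$ for every $i \in \{1,\ldots,d\}$. I would focus first on the $p_d$-direction: setting $L = \langle M/p_d\rangle$, both $A$ and $B$ are $L$-free. By Sands's theorem, $|A||B|=M$; writing $|A|=p_d^{a_d}k_A$ and $|B|=p_d^{b_d}k_B$ with $k_A=(|A|,M_0)$, $k_B=(|B|,M_0)$, and $M_0:=M/p_d^{n_d}$, one has $a_d+b_d=n_d$ and $k_Ak_B=M_0$. The hypothesis gives $k_A,k_B\le p_d-1$.

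The first step is to turn $L$-freeness into size constraints. Since $A$ injects into $\ZZ_M/L$, we have $|A|\le M/p_d$, whence $p_d^{a_d}k_A\le p_d^{n_d-1}k_Ak_B$, i.e.\ $p_d^{a_d-n_d+1}\le k_B<p_d$, forcing $a_d\le n_d-1$; symmetrically $b_d\le n_d-1$. The case $n_d=1$ is already contradictory (since $a_d+b_d=1$ with both $\ge 1$ is impossible) and yields (IKP1) immediately, so I assume $n_d\ge 2$ and $a_d,b_d\in\{1,\ldots,n_d-1\}$.

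Next I would exploit the cyclotomic structure in the $p_d$-direction. Setting $T_A=\{k:\Phi_{p_d^k}\mid A(X)\}$ and $T_B$ analogously, the tiling condition forces $T_A\cup T_B=\{1,\ldots,n_d\}$. Since $\prod_{k\in T_A}\Phi_{p_d^k}(1)=p_d^{|T_A|}$ divides $|A|=p_d^{a_d}k_A$ with $\gcd(k_A,p_d)=1$, we get $|T_A|\le a_d$ and similarly $|T_B|\le b_d$; comparing with $|T_A|+|T_B|\ge n_d=a_d+b_d$ shows $T_A,T_B$ partition $\{1,\ldots,n_d\}$ with $|T_A|=a_d$ and $|T_B|=b_d$. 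After possibly swapping $A$ and $B$, I may assume $n_d\in T_B$, so $\Phi_{p_d^{n_d}}\mid B(X)$.

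I would then translate this divisibility into structural information via CRT. Decompose $\ZZ_M\cong G\oplus H$ with $G=\ZZ_{p_d^{n_d}}$, $H\cong\ZZ_{M_0}$, and set $\mu_B^G(y)=|\{h\in H:(y,h)\in B\}|$. Evaluating at a primitive $p_d^{n_d}$-th root of unity and using the classification of nonnegative $\ZZ$-relations among $p_d$-th roots, the condition $\Phi_{p_d^{n_d}}\mid B(X)$ is equivalent to $\mu_B^G$ being constant on every coset of $G':=p_d^{n_d-1}G$ (the unique subgroup of $G$ of order $p_d$). Meanwhile, $L$-freeness of $B$ bounds the sum of $\mu_B^G$ over any $G'$-coset by $|H|=M_0$, so the common value $c_{\bar y}^B$ on a coset satisfies $p_d\, c_{\bar y}^B\le M_0$, giving $c_{\bar y}^B\le \lfloor M_0/p_d\rfloor<p_d$.

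The hard part will be closing the argument with a numerical contradiction. My plan is to feed this structural data into the convolution identity $\mu_A^G*\mu_B^G\equiv M_0\cdot\mathbf{1}_G$ on $G$, exploiting in addition the $L_i$-freeness for $i<d$: each $H$-column $A_y^G:=\{h:(y,h)\in A\}$ must be $L_i^H$-free for every order-$p_i$ subgroup $L_i^H$ of $H$, which tightly constrains the column sizes. The constancy of $\mu_B^G$ along $G'$-cosets together with the column bounds for $A$, combined with the hypothesis $k_A,k_B<p_d$ (which forces $M_0<p_d^2$), should then force either $\mu_A^G\equiv 0$ or $\mu_B^G\equiv 0$, contradicting $A,B\neq\emptyset$. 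This last step is the most delicate point of the argument and is where I expect the bulk of the technical work to lie.
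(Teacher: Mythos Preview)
Your early steps are correct: the $L$-freeness deductions, the partition of the prime-power cyclotomic divisors between $A$ and $B$, and the structural consequence that $\mu_B^G$ is constant on $G'$-cosets all hold. The gap is in the final step. Once you push the convolution identity $\mu_A^G * \mu_B^G \equiv M_0$ down to $G/G'$ (using the constancy of $\mu_B^G$), you get $\sigma_A * \bar\mu_B \equiv M_0$ on $\ZZ_{p_d^{n_d-1}}$; taking characters, this says exactly that for each $s\in\{1,\dots,n_d-1\}$ one has $s\in T_A$ or $s\in T_B$. But that is precisely the cyclotomic partition you already established, so the convolution identity carries no new information in your framework. The additional $L_i$-freeness bounds for $i<d$ give only $\mu_A^G(y)\le M_0/p_i$, which in general is much weaker than the bound $\mu_A^G(y)\le k_A$ you would actually need, and there is no evident mechanism to force $\mu_A^G\equiv 0$ or $\mu_B^G\equiv 0$ from these constraints. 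As written, the closing step does not go through.

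The paper's argument is quite different and much shorter. It looks at the single fiber $\{kM/p_d:0\le k<p_d\}$ and writes $kM/p_d=a_k+b_k$ with $a_k\in A$, $b_k\in B$. A brief divisor-exclusion argument (a ``splitting lemma'') shows that either all $a_k$ lie in one hyperplane $\Pi_d(a_0)$ or all $b_k$ lie in $\Pi_d(b_0)$; say the former. A plane bound---proved from the same Coven--Meyerowitz input you used to get $|T_A|=a_d$, but deployed pointwise rather than globally---gives $|A\cap\Pi_d(a_0)|\le (|A|,M/p_d^{n_d})<p_d$. Pigeonhole then forces $a_k=a_{k'}$ for some $k\ne k'$, whence $b_k-b_{k'}=(k-k')M/p_d$ and $M/p_d\in\Div(B)$. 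Note that this yields the sharper conclusion $M/p_d\in\Div(A)\cup\Div(B)$ and never uses the assumption $M/p_i\notin\Div(A)\cup\Div(B)$ for $i<d$; the missing idea in your approach is this localization to a single fiber, which is what allows the hypothesis $p_d>k_A,k_B$ to be converted into a pigeonhole.
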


In many cases, the assumption (\ref{pd-large}) of Theorem \ref{easyKeller} can be weakened; see Theorem \ref{easyKeller2} and Lemma \ref{easyKeller3}.

We do not know of any counterexamples to either (IKP1) or (CKP).
However, there exist integer tilings for which (IKP2) fails. Our counterexamples are provided by the class of tilings we define now.

\begin{definition}\label{def-cubetilings}
Let $M=p_1^2\dots p_d^2$, where $p_1,\dots,p_d$ are distinct primes. Let $M_i=M/p_i^2$ for $i=1,\dots,d$.
An {\em integer cube tiling} is a tiling of the form
$A\oplus B=\ZZ_M$, where
\begin{equation}\label{defineB}
\begin{split}
{B}&=\bigoplus_{j=1}^d \{0,M_j,2M_j,\dots,(p_j-1)M_j\}
\\
&=\left\{\sum_{j=1}^d c_jM_j:\ c_j\in\{0,1,\dots,p_j-1\}, \ j=1,\dots,d\right\}.
\end{split}
\end{equation}
\end{definition}

Equation (\ref{defineB}) implies in particular that $|A|=|B|=p_1\dots p_d$. 
We may view integer cube tilings as tilings of the integer lattice by translates of a discrete rectangular box; see Section \ref{integer-cubes} for more details.

\begin{theorem}\label{thm1}
Assume that $d\geq 8$. For any choice of distinct primes $p_1,\dots,p_d$, there exists an integer cube tiling $ A\oplus B=\ZZ_M$ that does not satisfy (IKP2).
\end{theorem}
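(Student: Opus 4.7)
The plan is to translate the problem, via the Chinese Remainder Theorem, into a statement about cube tilings of a discrete torus, and then invoke the Lagarias--Shor/Mackey Keller counterexamples. First, apply CRT $\ZZ_M \cong \prod_{j=1}^d \ZZ_{p_j^2}$ and compose with the coordinate-wise group automorphism $x_j \mapsto u_j^{-1} x_j$ with $u_j = M_j \bmod p_j^2$; this sends $B$ to the coordinate box $B' = \prod_j \{0, 1, \ldots, p_j - 1\}$ and each fiber $F_i$ to $F_i' = p_i \ZZ_{p_i^2}$ in coordinate $i$, zero elsewhere. Write each $x_j \in \ZZ_{p_j^2}$ in high-low digit form $x_j = c_j p_j + r_j$ with $c_j, r_j \in \{0, 1, \ldots, p_j - 1\}$. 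A short argument then shows that any tiling complement $A$ of $B'$ has the form $A = \{(c, r(c)) : c \in \prod_j \ZZ_{p_j}\}$ for some function $r \colon \prod_j \ZZ_{p_j} \to \prod_j \ZZ_{p_j}$: two elements of $A$ sharing the same $c$ would differ by an element of $B' - B'$, contradicting tiling uniqueness.

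A direct coordinate-by-coordinate carry computation yields two clean reformulations. First, the tiling $A \oplus B' = \prod_j \ZZ_{p_j^2}$ holds if and only if, for every $r^* \in \prod_j \ZZ_{p_j}$, the self-map $\Phi_{r^*}(c) = c + \varepsilon(c, r^*)$ with $\varepsilon_j(c, r^*) = \mathbf{1}[r^*_j < r(c)_j] \in \{0, 1\}$ is a bijection of $\prod_j \ZZ_{p_j}$. Second, (IKP2) fails for the tiling if and only if $r$ is non-constant on every axis-aligned line of $\prod_j \ZZ_{p_j}$; this is because a containment $u + F_i' \subset A$ forces $r$ to take the constant value $u^r$ on the axis line through $u^c$ in direction $i$. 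Rescaling each coordinate by $1/p_j$ exhibits integer cube tilings as cube tilings of the discrete torus $\prod_j (\RR/p_j\ZZ)$ by unit cubes, with (IKP2) matching Keller's face-sharing property.

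For $d \geq 8$, the Lagarias--Shor/Mackey counterexamples supply a Keller code $K \subset \ZZ_4^d$ of size $2^d$, equivalently a function $r_K \colon \{0,1\}^d \to \{0,1\}^d$ satisfying both the analogous tiling condition on $\ZZ_4^d$ and non-constancy on every axis line of $\{0, 1\}^d$. To produce the required $r$ for arbitrary distinct primes, introduce a binary digest $\chi \colon \prod_j \ZZ_{p_j} \to \{0, 1\}^d$, for instance $\chi_j(c_j) = \mathbf{1}[c_j \ne 0]$, and consider the lift $r(c) = r_K(\chi(c))$. The axis non-constancy lifts automatically, since $\chi$ restricted to any axis line in $\prod_j \ZZ_{p_j}$ takes exactly two binary vectors that differ in one coordinate, on which $r_K$ is non-constant by hypothesis.

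The main obstacle is verifying the tiling condition for the lifted $r$. Noting that $\Phi_{r^*}$ acts as the identity in coordinates with $r^*_j \ne 0$, bijectivity reduces to a self-consistency count on $J = \{j : r^*_j = 0\}$: for each target $c'_J$ and each fixed value of $c_{J^c}$, exactly one $\eta \in \{0, 1\}^J$ should satisfy $\eta_j = \mathbf{1}[c'_j \ne r_K(\eta, \chi_{J^c}(c_{J^c}))_j]$ for all $j \in J$. For a generic Keller code this count need not be one, so the construction must be refined---either by choosing $K$ and $\chi$ with additional structural properties (exploiting the explicit form of the Lagarias--Shor and Mackey codes), or by post-composing the lift with a coordinate-wise permutation of $\prod_j \ZZ_{p_j}$ that preserves the axis non-constancy of $r$ while making every $\Phi_{r^*}$ bijective. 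This combinatorial lifting step, rather than the reformulation, is the technical crux of the proof, and is where the detailed structure of the known Keller counterexamples becomes essential.
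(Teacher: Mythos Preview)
Your reformulation is correct and clean: after the CRT normalization, tiling complements of $B'$ in $\prod_j \ZZ_{p_j^2}$ are parametrized by functions $r:\prod_j\ZZ_{p_j}\to\prod_j\ZZ_{p_j}$, the tiling condition becomes bijectivity of each $\Phi_{r^*}$, and failure of (IKP2) becomes axis non-constancy of $r$. But the proposal does not actually construct such an $r$. You explicitly flag the verification of the tiling condition for the lift $r(c)=r_K(\chi(c))$ as ``the main obstacle'' and ``the technical crux'', and then leave it unresolved, offering only two vague directions for a fix. This is not a minor lacuna: it is the entire content of the theorem. A Keller code on $\{0,1\}^d$ gives a tiling of $\ZZ_4^d$, and there is no a priori reason a digest lift should give a tiling of $\prod_j\ZZ_{p_j^2}$; your own fixed-point equation shows exactly why it generically fails. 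Neither of your suggested repairs (special structure of the explicit codes, or post-composition by coordinate permutations) comes with any argument.

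The paper's route is genuinely different and explains what your approach is missing. It works geometrically in $\RR^d$, starting from the rescaled Mackey tiling $S_0\oplus R=\RR^d$ with $S_0\subset\prod_j\frac{p_j}{2}\ZZ$, which has the stronger property of \emph{no shared faces} (not merely no columns). It then applies Szab\'o's periodization one coordinate at a time: restrict $S_{j-1}$ to the slab $0\le x_j<p_j^2$ (with a half-shift on the half-integer layers to make the $j$-th coordinate integral) and extend $p_j^2$-periodically. The key point is that this operation may introduce shared faces in the $e_j$ direction but cannot introduce a \emph{column} there, since a column would force two face-sharing boxes already inside the original slab, contradicting the inductive hypothesis that $S_{j-1}$ had no shared $e_j$-faces. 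After $d$ steps one has an $\calL_M$-periodic integer tiling with no columns. This controlled downgrade from ``no shared faces'' to ``no columns'' under periodization is precisely the mechanism that makes the construction work for arbitrary primes, and it has no counterpart in your outline.
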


Our proof of Theorem \ref{thm1} is an adaptation of the counterexamples to Keller's conjecture in dimensions 8 and higher \cite{LShor1}, \cite{Mackey}, together with a rearrangement argument due to Szab\'o \cite{Sz-Keller}. 
On the other hand, the known positive results towards Keller's conjecture imply the following theorem for this specific type of tiling.

\begin{theorem}\label{lowdim-thm-cube}
All cube integer tilings with $d\leq 6$ satisfy (IKP2) (therefore also (IKP1)).
Moreover, (IKP1) holds for all cube integer tilings with $d=7$.
\end{theorem}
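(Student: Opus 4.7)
The plan is to translate each integer cube tiling into a genuine cube tiling of $\RR^d$ and then apply the known positive results toward Keller's conjecture. Concretely, I will normalize $B$ to a standard discrete box via a CRT-compatible multiplication, lift to $\RR^d$, rescale to a unit-cube tiling, and then read off (IKP2) or (IKP1) from (KP2) or (KP1).

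\emph{Normalization.} I identify $\ZZ_M \cong \prod_{j=1}^d \ZZ_{p_j^2}$ via CRT. Under this identification, each $M_j = M/p_j^2$ becomes an element with a unit $u_j \in \ZZ_{p_j^2}^\times$ in the $j$-th slot and $0$ elsewhere, so $B$ becomes the product of arithmetic progressions $\prod_j \{0, u_j, 2u_j, \dots, (p_j-1)u_j\}$. Multiplication by the unit $u^{-1} \in \ZZ_M^\times$ with CRT components $(u_1^{-1}, \dots, u_d^{-1})$ is a bijection of $\ZZ_M$ that preserves $\gcd(\cdot, M)$ (hence preserves every $\Div$ set); it also stabilizes each fiber $F_i$ setwise, since $F_i$ sits inside the $i$-th CRT slot as the unique order-$p_i$ subgroup of $\ZZ_{p_i^2}$, which is invariant under unit multiplication. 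Replacing $A$ by $u^{-1}A$, I may therefore assume $B = \prod_j \{0, 1, \dots, p_j-1\}$ in CRT coordinates, without affecting either (IKP1) or (IKP2).

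\emph{Lift to $\RR^d$.} Choosing integer representatives $a_j \in \{0, \dots, p_j^2-1\}$ for each $a \in A$, I set $T := A + \prod_j p_j^2 \ZZ \subset \RR^d$. The discrete factorization $A \oplus B = \prod_j \ZZ_{p_j^2}$ extends to a tiling of $\RR^d$ by translates of the box $\prod_j [0, p_j)$ with translation set $T$, since the box contains exactly the integer lattice points $\prod_j \{0, 1, \dots, p_j-1\}$ without wraparound. Rescaling the $j$-th coordinate by $1/p_j$ produces a unit cube tiling $T' \oplus [0,1)^d = \RR^d$. Unwinding the correspondence: a pair $t_1, t_2 \in T'$ with $t_1 - t_2 = e_i$ pulls back to $a_1, a_2 \in A$ whose CRT difference has $p_i$ in the $i$-th slot and $0$ elsewhere, which means $\gcd(a_1-a_2, M) = M/p_i$ and hence $M/p_i \in \Div(A)$; an infinite face-sharing column $\{t + n e_i : n \in \ZZ\} \subset T'$ pulls back to $a \in A$ with $a + F_i \subset A$.

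\emph{Invoking Keller results.} The {\L}ysakowska--Przes{\l}awski theorem \cite{LP1, LP2} establishes (KP2) for all cube tilings when $d \leq 6$, which via the correspondence yields (IKP2) for every cube integer tiling in that range. The Brakensiek--Heule--Mackey--Narv\'aez theorem \cite{BHMN} establishes (KP1) for $d = 7$, yielding (IKP1) there. The main obstacle I foresee is making the lift precise --- showing rigorously that the discrete factorization extends to a bona fide cube tiling of $\RR^d$ and that the Keller properties translate as claimed --- but the product-box form of $B$ and the clean alignment of $\prod_j [0,p_j)$ with the integer lattice reduce this to routine bookkeeping, and I expect the correspondence is already developed in the paper's Section \ref{integer-cubes}.
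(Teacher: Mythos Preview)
Your proposal is correct and takes essentially the same approach as the paper: convert the integer cube tiling to a genuine cube tiling of $\RR^d$ via the correspondence in Section~\ref{integer-cubes}, rescale, and invoke \cite{LP1,LP2} for (KP2) in $d\leq 6$ and \cite{BHMN} for (KP1) in $d=7$. Your extra normalization step (multiplying by a CRT unit to turn $B$ into the standard product box) is a cosmetic variant of the paper's use of the projection $\pi$, which already satisfies $B=\pi(\Lambda_N)$ directly; either way the lift and the translation of (KP1)/(KP2) back to (IKP1)/(IKP2) go through exactly as you describe.
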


For cube tilings, the property (KP2) is formally stronger than (KP1), but they turn out to be true or false in the same dimensions except possibly for $d=7$ where, to the best of our knowledge, the status of (KP2) is unknown. 
On the other hand, the property (IKP2) is strictly stronger than (IKP1), in the sense that there exist integer cube tilings that satisfy (IKP1) but not (IKP2). To see this, let $d\geq 8$, and let $p_1,\dots, p_d$ be distinct primes. By Theorem \ref{thm1}, for any choice of $p_1,\dots,p_d$ there exists an integer cube tiling $\tilde A\oplus \tilde B=\ZZ_M$ for which (IKP2) does not hold. On the other hand, if $p_1,\dots,p_d$ satisfy the additional assumptions of either 
Theorem \ref{cuboidKeller} or 
Theorem \ref{easyKeller}, then (IKP1) must hold for the same tiling.

The rest of this paper is organized as follows. In Section \ref{subsec-convert}, we detail the conversion between integer tilings and lattice tilings with appropriate periodicity conditions. In Section \ref{cyclo-tools}, we introduce notation and basic cyclotomic divisibility tools to be used in the proofs of Theorems \ref{thm-lowdim}, \ref{cuboidKeller}, and \ref{easyKeller}. We prove these theorems in Sections \ref{sec-proof-thm-lowdim}, \ref{sec-proof-thm-cuboidKeller}, and \ref{sec-proof-thm-easyKeller}.
We prove Theorem \ref{thm1} in Section \ref{thm1-proof}. Finally, in Section \ref{sec-CM}, we discuss the relationship between integer Keller properties and the Coven-Meyerowitz conjecture.


\section{Correspondence between integer tilings and multidimensional lattice tilings}
\label{subsec-convert}

\subsection{The general case}
We establish a natural correspondence between integer tilings and tilings of multidimensional integer lattices satisfying appropriate periodicity conditions. This also provides a correspondence between integer cube tilings and a class of cube tilings of $\RR^d$.

Let $M=\prod_{i=1}^d p_i^{n_i},$
where $p_1,\dots,p_d$ are distinct primes and $n_1,\dots,n_d\in\NN$. It will be convenient to have
a specific coordinate system on $\ZZ_M$ corresponding to (\ref{CRT}). Let $M_i = M/p_i^{n_i} = \prod_{j\neq i} p_j^{n_j}$. Let also $[n]=\{0,1,\dots,n-1\}$ for $n\in\NN$. Define the projection
\begin{equation}\label{def-pi}
\ZZ^d \ni x=(x_1,\dots,x_d) \ \to   \ \pi(x):= \sum_{i=1}^d x_i M_i ,
\end{equation}
and let
\begin{equation}\label{def-L}
\begin{split}
\calL_M& := p_1^{n_1}\ZZ\times \dots\times p_d^{n_d}\ZZ \subset\ZZ^d,
\\
\Lambda_M& := [p_1^{n_1}]\times\dots\times [p_d^{n_d}]\subset\ZZ^d,
\end{split}
\end{equation}
so that $\Lambda_M\oplus\calL_M=\ZZ^d$.

\begin{lemma}\label{pi-lemma}
Let $x\in\ZZ^d$. Then
\begin{equation}\label{LM-equiv}
x\in\calL_M \ \ \Leftrightarrow\  \ \pi(x)\equiv 0 \hbox{ mod }M.
\end{equation}
Furthermore, 
$\pi(\Lambda_M)$ is a complete residue system mod $M$, the projection $\pi$  is one-to-one on $\Lambda_M$, and
$\pi(\calL_M)=M\ZZ$.
%
%
\end{lemma}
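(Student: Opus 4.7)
The plan is to establish the equivalence (\ref{LM-equiv}) first by direct coordinate-wise computation, and then derive the remaining assertions from the Chinese Remainder Theorem machinery already implicit in the setup.

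For the forward direction of (\ref{LM-equiv}), if $x_i = p_i^{n_i} y_i$ for each $i$, then $x_i M_i = y_i \cdot p_i^{n_i} M_i = y_i M$, so $\pi(x) = \sum_i y_i M \in M\ZZ$. For the reverse direction, I fix $i \in \{1, \dots, d\}$ and reduce modulo $p_i^{n_i}$. The key observation is that $p_i^{n_i} \mid M_j$ for every $j \neq i$, while $\gcd(M_i, p_i^{n_i}) = 1$. Hence from $\pi(x) = \sum_j x_j M_j \equiv 0 \pmod{M}$ I get $x_i M_i \equiv 0 \pmod{p_i^{n_i}}$, which forces $p_i^{n_i} \mid x_i$. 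Running this argument for each $i$ yields $x \in \calL_M$.

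Next, I verify that $\pi$ maps $\Lambda_M$ bijectively onto a complete residue system mod $M$. Since no single prime $p_i$ divides $M_i$, we have $\gcd(M_1, \dots, M_d) = 1$, so by Bezout $\pi(\ZZ^d) = \ZZ$. Combined with the decomposition $\ZZ^d = \Lambda_M \oplus \calL_M$ and the inclusion $\pi(\calL_M) \subset M\ZZ$ from (\ref{LM-equiv}), this forces $\pi(\Lambda_M)$ to cover every residue class mod $M$. A cardinality comparison ($|\Lambda_M| = \prod p_i^{n_i} = M$) then shows that $\pi$ is one-to-one on $\Lambda_M$, both modulo $M$ and as an integer-valued map: the difference of two distinct elements of $\Lambda_M$ has coordinates strictly bounded in absolute value by $p_i^{n_i}$ in each slot, and the only such vector in $\calL_M$ is zero.

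Finally, $\pi(\calL_M) = M\ZZ$ follows by combining the containment $\pi(\calL_M) \subset M\ZZ$ from (\ref{LM-equiv}) with an explicit preimage for each multiple of $M$: the vector $(m p_1^{n_1}, 0, \dots, 0) \in \calL_M$ maps to $m p_1^{n_1} M_1 = mM$. No step here presents a genuine obstacle; the lemma is essentially a transcription of the Chinese Remainder Theorem into the coordinate system defined by $\pi$, and the main care required is only in keeping track of which $M_j$ are divisible by which $p_i^{n_i}$.
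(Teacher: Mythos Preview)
Your proof is correct and follows essentially the same approach as the paper. The only minor variation is in handling the $\Lambda_M$ part: the paper proves injectivity of $\pi$ mod $M$ directly (distinct elements of $\Lambda_M$ differ by something not in $\calL_M$, hence by (\ref{LM-equiv}) their images are incongruent mod $M$) and then infers that $\pi(\Lambda_M)$ is a complete residue system by counting, whereas you go through Bezout to get surjectivity first and then count to obtain injectivity---but this is a cosmetic difference in a pigeonhole argument, not a genuinely different route.
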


\begin{proof}
We first prove (\ref{LM-equiv}). Let
$x\in \calL_M$. Then $M$ divides each term in the sum $\sum_i x_i M_i$, hence $\pi(x)\in M\ZZ$. Conversely, suppose that $M|\pi(x)$, and let $j\in\{1,\dots,d\}$. Then $p_j^{n_j}|\pi(x)= \sum_i x_i M_i$. Since $p_j^{n_j}|M_i$ for all $i\neq j$, and $(p_j,M_j)=1$, we must have $p_j^{n_j}|x_j$. Since this is true for all $j$, we have $x\in \calL_M$. 

Next, if $x,x'\in\Lambda_M$ are distinct, then $x-x'\not\in\calL_M$ by definition. By (\ref{LM-equiv}), we have
$\pi(x)\not\equiv \pi(x')$ mod $M$, implying the statements about $\Lambda_M$.
Clearly, (\ref{LM-equiv}) implies that $\pi(\calL_M)\subset M\ZZ$. The converse inclusion follows from the fact that $\pi((mp_1^{n_1},0,\dots,0))=mM$ for all $m\in\ZZ$.
\end{proof}

\begin{corollary}\label{pi-corollary}
Let $\tilde{A},\tilde{B}\subset\Lambda_M$, and let $A=\pi(\tilde A),B=\pi(\tilde B)$. Then $A\oplus B=\ZZ_M$ if and only if
$\tilde{A}\oplus \tilde{B}\oplus\calL_M=\ZZ^d$.
\end{corollary}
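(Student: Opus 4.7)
The plan is to prove both directions directly from Lemma \ref{pi-lemma}, treating the corollary as a bookkeeping translation between two equivalent ways of parametrizing $\ZZ_M$. The key inputs are that $\pi$ restricts to a bijection $\Lambda_M\to\ZZ_M$ (via the complete residue system statement), that $\pi(\calL_M)=M\ZZ$, and that $\pi$ is additive, so congruences mod $M$ in $\ZZ$ pull back to containment in $\calL_M$ in $\ZZ^d$.

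For the forward direction, assume $A\oplus B=\ZZ_M$. Given $x\in\ZZ^d$, first use $\Lambda_M\oplus\calL_M=\ZZ^d$ to write $x=\lambda+\ell_0$ with $\lambda\in\Lambda_M$, $\ell_0\in\calL_M$. The residue $\pi(\lambda)\bmod M$ has a unique representation $a+b$ with $a\in A$, $b\in B$; since $\pi$ is one-to-one on $\Lambda_M$ I can uniquely choose $\tilde a\in\tilde A$, $\tilde b\in\tilde B$ with $\pi(\tilde a)=a$, $\pi(\tilde b)=b$. Then $\pi(\lambda-\tilde a-\tilde b)\equiv 0\pmod M$, hence $\lambda-\tilde a-\tilde b\in\calL_M$ by the lemma, yielding a decomposition $x=\tilde a+\tilde b+\ell$ with $\ell\in\calL_M$. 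Uniqueness follows by applying $\pi$ to any two alleged decompositions and invoking uniqueness of $A\oplus B=\ZZ_M$ together with injectivity of $\pi$ on $\Lambda_M$, then subtracting to pin down the $\calL_M$-part.

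For the reverse direction, assume $\tilde A\oplus\tilde B\oplus\calL_M=\ZZ^d$. Given $n\in\ZZ_M$, lift to $x\in\Lambda_M$ with $\pi(x)\equiv n\pmod M$ and decompose $x=\tilde a+\tilde b+\ell$; applying $\pi$ and using $\pi(\calL_M)=M\ZZ$ gives $n\equiv\pi(\tilde a)+\pi(\tilde b)\pmod M$, so $A+B$ covers $\ZZ_M$. For disjointness, if $\pi(\tilde a)+\pi(\tilde b)\equiv\pi(\tilde a')+\pi(\tilde b')\pmod M$, then $\tilde a+\tilde b-\tilde a'-\tilde b'\in\calL_M$ by the lemma, so $\tilde a+\tilde b+\ell'=\tilde a'+\tilde b'+0$ in $\ZZ^d$ for some $\ell'\in\calL_M$, and uniqueness of the decomposition $\tilde A\oplus\tilde B\oplus\calL_M=\ZZ^d$ forces $\tilde a=\tilde a'$ and $\tilde b=\tilde b'$.

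There is no real obstacle here; the only place where one might slip is in distinguishing $A,B$ as subsets of $\ZZ_M$ from their lifts $\pi(\tilde A),\pi(\tilde B)\subset\ZZ$, and in remembering that ``unique decomposition'' in the $\ZZ^d$ tiling supplies uniqueness of all three summands simultaneously, not just two. Once that bookkeeping is handled consistently, both directions collapse to the three bijection/additivity/kernel properties recorded in Lemma \ref{pi-lemma}.
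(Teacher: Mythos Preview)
Your proof is correct and follows essentially the same approach as the paper: both arguments reduce the corollary to the three properties of $\pi$ recorded in Lemma~\ref{pi-lemma} (additivity, bijectivity on $\Lambda_M$, and $\ker\pi\bmod M=\calL_M$). The only cosmetic difference is that the paper, in the direction $\tilde A\oplus\tilde B\oplus\calL_M=\ZZ^d\Rightarrow A\oplus B=\ZZ_M$, replaces your explicit covering argument with the cardinality shortcut $|A||B|=|\tilde A||\tilde B|=M$ and then checks only uniqueness; the paper also leaves the other direction to the reader, which you have filled in.
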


\begin{proof}
Assume that $\tilde{A}\oplus \tilde{B}\oplus\calL_M=\ZZ^d$. Since $\pi$ is one-to-one on $\Lambda_M$, we have $|A||B|=|\tilde{A}||\tilde B|=M$.
To prove that $A\oplus B=\ZZ_M$ is a tiling, it remains to verify that 
\begin{equation}\label{notildeAB}
\hbox{ if }a+b=a'+b'\hbox{ mod }M, \ a,a'\in A, \ b,b'\in B, \hbox{ then }(a,b)=(a',b').
\end{equation}
Let $a=\pi(\tilde a),a'=\pi(\tilde a'), b=\pi(\tilde b), b'=\pi(\tilde b')$ for some
$\tilde a,\tilde a' \in\tilde A$ and $\tilde b,\tilde b' \in \tilde B$. By (\ref{LM-equiv}), 
if $a+b\equiv a'+b'\hbox{ mod }M$, then $(\tilde a+\tilde b)-(\tilde a'+\tilde b')\in\mathcal{L}_M$. But since 
$\tilde{A}\oplus \tilde{B}\oplus\calL_M=\ZZ^d$, this can only happen when $\tilde a=\tilde a'$ and $\tilde b=\tilde b'$, so that $a=a'$ and $b=b'$. This proves (\ref{notildeAB}).

For the converse, we reverse the above argument. The details are left to the reader.
\end{proof}

\begin{remark}\label{pi-remark}
Let an integer tiling $A\oplus B=\ZZ_M$ be given. By Lemma \ref{pi-lemma}, we may represent $A,B$ as subsets of $\pi(\Lambda_M)$. Then there exist $\tilde{A},\tilde{B}\subset \Lambda_M$ such that $A=\pi(\tilde A),B=\pi(\tilde B)$, and Corollary \ref{pi-corollary} applies to these sets.
\end{remark}

This establishes a one-to-one correspondence between $M$-periodic tilings of $\ZZ$ and $\calL_M$-periodic tilings of $\ZZ^d$. Any tiling of $\ZZ$ by a finite set must be $M$-periodic for some $M$, therefore we may lift it to a multidimensional tiling as described above.
However, we caution the reader that there are many translational tilings of integer lattices by finite sets that cannot be matched to 1-dimensional integer tilings in this manner, either because they are non-periodic \cite{GT} or because their period lattice does not have the form required in (\ref{def-L}).


\subsection{Integer cube tilings}
\label{integer-cubes}

We now assume that $M=N^2$, where $N=p_1\dots p_d$ and $p_1,\dots,p_d$ are distinct primes. 
Let $M_i=M/p_i^2$ for $i=1,\dots,d$, and define $\pi:\ZZ^d\to\ZZ$ as in (\ref{def-pi}). 
Let $A\oplus B$ be an integer cube tiling as in Definition \ref{def-cubetilings}. Then the set $B$ in (\ref{defineB}) satisfies
$$
B=\pi (\Lambda_N).
$$
By Corollary \ref{pi-corollary} and Remark \ref{pi-remark}, there is a set $\tilde A\subset\Lambda_M$ such that
$\tilde A \oplus \Lambda_N \oplus\calL_M=\ZZ^d$. Let $T:=\tilde A \oplus\calL_M\subset\ZZ^d$, and let
\begin{equation}\label{basic rectangle}
R:= B+[0,1)^d = [0,p_1)\times \dots  \times [0,p_d) \subset\RR^d.
\end{equation}
Then $T\oplus R=\RR^d$ is a tiling of $\RR^d$ by translates of the rectangular box $R$.
Note the periodicity condition
\begin{equation}\label{periodic}
T \hbox{ is invariant under translations by all } t\in \calL_M=p_1^{2}\ZZ\times\dots\times p_d^{2}\ZZ.
\end{equation}
Conversely, given a tiling $T\oplus R=\RR^d$, where $R$ is the box in (\ref{basic rectangle}) and $T\subset\ZZ^d$ satisfies (\ref{periodic}), we can convert it to an $M$-periodic integer cube tiling by reversing the above procedure.

We can rescale $R$ to the unit cube $Q_d=[0,1)^d$ in $\RR^d$; this also rescales any tiling of $\RR^d$ by translates of $R$ to a tiling by translates of a unit cube (hence our terminology). However, for our purposes it will be easier to use the box $R$ as defined in (\ref{basic rectangle}), without rescaling, and rescale the unit cube instead when needed. This convention makes it easier to keep track of the additional restrictions on periodicity and translation vectors that our integer cube tilings must satisfy.

In this setting, (IKP2) is the direct analogue of (KP2). Indeed, for a given $i\in\{1,\dots,d\}$, we have $u+F_i \subset A$ for some $u\in\ZZ_M$ if and only if the tiling $T\oplus R=\RR^d$ contains a column in the direction of $e_i$ -- more precisely, the translation set $T$ contains a subset of the form $\{v+mp_ie_i:\ m\in\ZZ\}$ for some $v\in\ZZ^d$. This is (KP2) after rescaling $R$ to $Q_d$.

A little bit more care is needed in establishing the appropriate analogue of (KP1) in our context. In the tiling $T\oplus R=\RR^d$, two translates $t+R$ and $t'+R$ share a $(d-1)$-dimensional face if and only if $u=\pi(t)$ and $u'=\pi(t')$ satisfy
\begin{equation}\label{badcond}
u-u'\equiv \pm M/p_i \mod M
\end{equation}
for some $i\in\{1,\dots,d\}$.
However, (IKP1) is a more natural analogue of (KP1) for integer tilings than (\ref{badcond}), for the following reason. The coordinate system corresponding to the decomposition (\ref{CRT}) is not unique: for example, if $\pi$ is the projection defined in (\ref{def-pi}) and $r\in\ZZ$ is relatively prime to $M$, then the mapping $x\to r\pi(x)$ also establishes a linear bijection between $\Lambda_M$ (considered as a group mod $\calL_M$) and $\ZZ_M$. This leads to multiple representations of the same integer tiling $A\oplus B=\ZZ_M$ as $\calL_M$-periodic tilings of $\ZZ^d$. 
The property (IKP1) does not depend on the choice of such representation, while (\ref{badcond}) does depend on it.

As pointed out earlier, Theorem \ref{lowdim-thm-cube} is now a straightforward consequence of the existing results on Keller's conjecture for cube tilings.

\begin{proof}[Proof of Theorem \ref{lowdim-thm-cube}]
Let $A\oplus B=\ZZ_M$ be an integer cube tiling, and let $T\oplus R=\RR^d$ be the tiling of $\RR^d$ constructed in Section \ref{integer-cubes}. The theorem follows by applying the results of  \cite{LP1,LP2, BHMN} to a rescaling of $T$.
\end{proof}

In the other direction, a straightforward discretization of counterexamples to Keller's conjecture in dimensions 8 and higher \cite{LShor1}, \cite{Mackey} does not produce counterexamples to (IKP1) or (IKP2). This is because, in order to be able to convert a tiling of $\RR^d$ by the rectangular box $R$ back to an integer cube tiling, the translation set $T$ must satisfy the periodicity condition (\ref{periodic}). The unit cube tilings in \cite{LShor1}, \cite{Mackey} are all 2-periodic in each cardinal direction, and become $2p_i$-periodic in the $i$-th cardinal direction after rescaling the unit cube to $R$. Choose $i$ so that $p_i\neq 2$. If the tiling were also $p_i^2$-periodic in the $i$-th direction (as required in (\ref{periodic})), this would imply $p_i$-periodicity in that direction, hence a column parallel to $e_i$, contradicting the failure of (KP2).

One could ask whether the tilings in \cite{LShor1}, \cite{Mackey}, after rescaling, could be ``corrected'' to be $p_i^2$-periodic instead of $2p_i$-periodic in the $e_i$ direction for each $i$. This is in fact how we prove Theorem \ref{thm1}.

We note the following property of cube integer tilings.

\begin{lemma}\label{keller-sands}
Let $S\subset\Lambda_M$ with $|S|=N$, and define $B$ as in (\ref{defineB}). Then $S\oplus \Lambda_N \oplus \calL_M=\ZZ^d$ (hence
$\pi(S)\oplus B=\ZZ_M$ is a cube integer tiling) if and only if for every $a,a'\in S$ such that $a\neq a'$, there exists $i\in\{1,\dots,d\}$ such that $p_i\parallel a_i-a'_i$.
\end{lemma}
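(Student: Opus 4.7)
The plan is to reduce the lemma, via Corollary \ref{pi-corollary}, to the characterization of $\pi(S) \oplus B = \ZZ_M$ provided by Sands' theorem (Theorem \ref{thm-sands}), and then to compute the relevant divisor sets explicitly. Note that $B = \pi(\Lambda_N)$ directly from (\ref{defineB}). Hence Corollary \ref{pi-corollary} identifies $S \oplus \Lambda_N \oplus \calL_M = \ZZ^d$ with $\pi(S) \oplus B = \ZZ_M$, and since $|\pi(S)| \cdot |B| = N \cdot N = M$ automatically, Theorem \ref{thm-sands} further reduces the tiling condition to $\Div(\pi(S)) \cap \Div(B) = \{M\}$.

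The computational heart of the proof is the following valuation identity: if $n = \sum_i c_i M_i$ with $c_i \in \ZZ$, then since $p_j^2 \mid M_i$ for all $i \neq j$ while $(p_j, M_j) = 1$, the term $c_j M_j$ is the only summand whose $p_j$-adic valuation can be less than $2$, and therefore
$$(n, M) = \prod_j p_j^{\min(v_{p_j}(c_j),\,2)}.$$
Applied to $b - b' = \sum_j (x_j - x'_j) M_j$ with $x, x' \in \Lambda_N$, the bound $|x_j - x'_j| < p_j$ forces $\min(v_{p_j}(x_j - x'_j), 2) \in \{0, 2\}$ (equal to $0$ iff $x_j \neq x'_j$), giving
$$\Div(B) = \Bigl\{\prod_{j \in I} p_j^2 : I \subseteq \{1, \dots, d\}\Bigr\}.$$
Applied to $\pi(s) - \pi(s') = \sum_j (s_j - s'_j) M_j$ with $s, s' \in S \subset \Lambda_M$, the bound $|s_j - s'_j| < p_j^2$ yields $(\pi(s) - \pi(s'), M) = \prod_j p_j^{e_j(s,s')}$, where $e_j(s,s') \in \{0,1,2\}$ according to whether $p_j \nmid s_j - s'_j$, $p_j \parallel s_j - s'_j$, or $s_j = s'_j$ (the last because $|s_j - s'_j| < p_j^2$ makes $v_{p_j}(s_j - s'_j) \geq 2$ force $s_j = s'_j$).

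Comparing the two computations, $\prod_j p_j^{e_j(s,s')}$ lies in $\Div(B)$ iff every $e_j(s,s') \in \{0, 2\}$, and equals $M$ iff all $e_j(s,s') = 2$, iff $s = s'$. Consequently, $\Div(\pi(S)) \cap \Div(B) = \{M\}$ holds if and only if, for every $s \neq s'$ in $S$, some coordinate $j$ has $e_j(s, s') = 1$, that is, $p_j \parallel s_j - s'_j$ --- exactly the condition in the lemma. The proof is essentially routine bookkeeping once Sands' theorem is applied; I foresee no genuine obstacle beyond carefully tracking the $p_j$-adic valuations in the two divisor computations, which rests crucially on the bound $|c_j| < p_j^2$ ensuring that higher valuations collapse in the desired way.
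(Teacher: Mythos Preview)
Your proof is correct and follows essentially the same approach as the paper: reduce to Sands' theorem via Corollary~\ref{pi-corollary}, compute $\Div(B)=\{\prod_{j\in I}p_j^2:I\subseteq\{1,\dots,d\}\}$, and read off the equivalence. Your valuation identity $(n,M)=\prod_j p_j^{\min(v_{p_j}(c_j),2)}$ makes explicit what the paper leaves implicit, and the paper additionally remarks that the lemma can be deduced from Keller's theorem on cube tilings, but the core argument is the same.
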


\begin{proof}
One proof of this is based on Sands's theorem. Let $A=\pi(S)$.
By Theorem \ref{thm-sands}, $A\oplus  B=\ZZ_M$ is a tiling
if and only if (\ref{e-sands}) holds. With $B$ defined in (\ref{defineB}), we have
$$
\Div( B)=\Big\{p_1^{\alpha_1}\dots p_d^{\alpha_d}:\ \alpha_i\in\{0,2\}\hbox{ for all }i\in\{1,\dots,d\} 
\Big\},
$$
so that (\ref{e-sands}) is equivalent to
$$
\Div(A)\subset \Big\{p_1^{\alpha_1}\dots p_d^{\alpha_d}|M:\ \alpha_i=1\hbox{ for some }i\in\{1,\dots,d\} 
\Big\}
$$
But this is equivalent to the condition in Lemma \ref{keller-sands}.

Alternatively, the lemma also follows from (a rescaled version of) Keller's theorem on cube tilings \cite{Keller}: if $T\oplus Q_d$ is a tiling of $\RR^d$, then for all $t,t'\in T$ with $t\neq t'$ there exists $i\in\{1,\dots,d\}$ such that $|t_i-t'_i|\in\NN$.
\end{proof}


\section{Cyclotomic divisibility tools}\label{cyclo-tools}

The notation below has been borrowed from \cite{LL1} and adapted to our setting.
We assume that $M=p_1^{n_1}\dots p_d^{n_d}$, where $p_1,\dots,p_d$ are distinct primes and $n_1,\dots,n_d\in\NN$.

We use $A(X)$, $B(X)$, etc. to denote polynomials modulo $X^M-1$ with integer coefficients. 
Each such polynomial $A(X)=\sum_{a\in\ZZ_M} w_A(a) X^a$ is associated with a weighted multiset in $\ZZ_M$, which we will also denote by $A$, with weights $w_A(x)$ assigned to each $x\in\ZZ_M$. (If the coefficient of $X^x$ in $A(X)$ is 0, we set $w_A(x)=0$.) In particular, if $A$ has $\{0,1\}$ coefficients, then
$w_A$ is the characteristic function of a set $A\subset \ZZ_M$. We use $\calm(\ZZ_M)$ to denote the 
family of all weighted multisets in $\ZZ_M$, and reserve the notation $A\subset \ZZ_M$ for sets.
We also use $\calm^+$ to denote the family of all weighted multisets in $\ZZ_M$, i.e.,
$$
\calm^+=\{A\in\calm(\ZZ_M): \ w_A(a)\geq 0\hbox{ for all }a\in\ZZ_M\}.
$$

For $D|M$, a {\em $D$-grid} in $\ZZ_M$ is a set of the form
$$
\Lambda(x,D):= x+D\ZZ_M=\{x'\in\ZZ_M:\ D|(x-x')\}
$$
for some $x\in\ZZ_M$. In particular, if $F_i$ is the fiber
\begin{equation}\label{def-Fi}
F_i=\{0,M/p_i,2M/p_i,\dots,(p_i-1)M/p_i\}
\end{equation}
for some $i$, we have $F_i=\Lambda(0,M/p_i)$. For a prime $p$ and an integer $m$, we write that $p^\alpha\parallel m$ if $p^\alpha|m$ but $p^{\alpha+1}\nmid m$.

\begin{definition}
\label{def-cuboids}
Let $M$ be as above. A \textbf{cuboid} is a multiset $\Delta \in \calm (\ZZ_M)$ associated to a mask polynomial of the form
\begin{equation}\label{def-delta}
\Delta(X)= X^c\prod_{j=1}^d \left(1-X^{r_jM/p_j}\right)
\end{equation}
 with $(r_j,p_j)=1$ for all $j$. Furthermore, if $A\in\calm(\ZZ_M)$, we define
\begin{equation}\label{delta-eval}
A[\Delta]=\sum_{x\in\ZZ_M} w_A(x)w_\Delta(x).
\end{equation}
\end{definition}

The geometric interpretation of $N$-cuboids $\Delta$ is as follows. With notation as in Definition \ref{def-cuboids}, let
\begin{equation}\label{def-DN}
D(M)=M/p_1\dots p_d.
\end{equation}
 Then the ``vertices'' $x\in\ZZ_M$ with $ w_\Delta(x)\neq 0$ form a full-dimensional rectangular box in the grid $\Lambda(c,D(M))$, with one vertex at $c$ and alternating $\pm 1$ weights. See Figure 1 for the geometric representation of a cuboid with $d=3$.

The following cyclotomic divisibility test is well known in the literature. The equivalence between (i) and (iii) is the Bruijn-R\'edei-Schoenberg theorem on the structure of vanishing sums of roots of unity (see \cite{deB}, \cite{LamLeung}, \cite{Mann}, \cite{Re1}, \cite{Re2}, \cite{schoen}). For the equivalence (i) $\Leftrightarrow$ (ii), see e.g.  \cite[Section 3]{Steinberger}, \cite[Section 3]{KMSV}, \cite[Section 5]{LL1}.

\begin{proposition}\label{cuboid}
Let $A\in\calm(\ZZ_M)$. Then the following are equivalent:

\medskip

(i) $\Phi_M(X)|A(X)$.

\medskip

(ii) For all cuboids $\Delta$, we have
\begin{equation}\label{id-3a}
A[\Delta]=0,
\end{equation}

\medskip

(iii) $A$ is a linear combination of fibers, so that
$$A(X)=\sum_{i} P_i(X)F_i(X) \mod X^M-1,$$
where $P_i(X)$ have integer (but not necessarily nonnegative) coefficients.
\end{proposition}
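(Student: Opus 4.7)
The plan is to establish the cycle (iii) $\Rightarrow$ (i) $\Leftrightarrow$ (ii) directly, and then invoke the classical structure theorem of de Bruijn, R\'edei, and Schoenberg for the substantive implication (i) $\Rightarrow$ (iii).

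First I would dispatch (iii) $\Rightarrow$ (i). The identity $F_i(X)(X^{M/p_i}-1)=X^M-1$ shows that at any primitive $M$-th root of unity $\zeta$ we have $\zeta^M=1$ but $\zeta^{M/p_i}\neq 1$, forcing $F_i(\zeta)=0$. Hence every integer combination $A(X)=\sum_i P_i(X)F_i(X)$ vanishes at all such $\zeta$, which is equivalent to $\Phi_M(X)\,|\,A(X)$.

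Next I would prove (i) $\Leftrightarrow$ (ii) by Fourier analysis on $\ZZ_M$. Let $\zeta=e^{2\pi i/M}$. Plancherel's identity gives
\[
A[\Delta]=\sum_{x\in\ZZ_M}w_A(x)w_\Delta(x)=\frac{1}{M}\sum_{k=0}^{M-1}A(\zeta^k)\overline{\Delta(\zeta^k)}.
\]
The formula $\Delta(\zeta^k)=\zeta^{kc}\prod_j(1-\zeta^{kr_jM/p_j})$ together with $(r_j,p_j)=1$ shows that $\Delta(\zeta^k)=0$ precisely when $p_j\,|\,k$ for some $j$, equivalently when $(k,M)>1$. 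For (i) $\Rightarrow$ (ii): if $\Phi_M\,|\,A$ then $A(\zeta^k)=0$ on the coprime residues while $\Delta(\zeta^k)=0$ on their complement, so every term in the Plancherel sum dies. For (ii) $\Rightarrow$ (i): fix $r_j=1$ for all $j$ and vary $c$ over $\ZZ_M$; the resulting function $F(c):=A[\Delta_c]$ is a convolution of $w_A$ with the mask $\Delta_0(X)=\prod_j(1-X^{M/p_j})$, so its Fourier transform at $k$ is a scalar multiple of $A(\zeta^k)\overline{\Delta_0(\zeta^k)}$. The hypothesis $F\equiv 0$ then forces $A(\zeta^k)=0$ whenever $\Delta_0(\zeta^k)\neq 0$, which by the previous calculation is exactly when $(k,M)=1$, yielding $\Phi_M\,|\,A$.

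Finally, (i) $\Rightarrow$ (iii) is the Bruijn--R\'edei--Schoenberg structure theorem for vanishing sums of roots of unity \cite{deB,LamLeung,Mann,Re1,Re2,schoen}, which asserts that the $\Phi_M$-divisible elements of $\ZZ[X]/(X^M-1)$ form the $\ZZ[X]$-module generated by the fibers $F_i(X)$. I would quote this result rather than reprove it; the main obstacle of the proposition sits in this direction, whose independent proof requires a careful induction on the prime support of $M$. The other three directions are, by contrast, routine Fourier manipulations built on the cuboid/fiber dictionary.
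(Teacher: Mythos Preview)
Your argument is correct. The paper does not actually prove this proposition; it simply records it as ``well known'' and cites the literature---\cite{deB,LamLeung,Mann,Re1,Re2,schoen} for (i)$\Leftrightarrow$(iii) and \cite{Steinberger,KMSV,LL1} for (i)$\Leftrightarrow$(ii)---so your write-up is more explicit than the paper itself. Your Fourier/Plancherel computation for (i)$\Leftrightarrow$(ii) is the standard one underlying those references, and your citation of the de~Bruijn--R\'edei--Schoenberg theorem for (i)$\Rightarrow$(iii) matches the paper's treatment exactly. One small terminological quibble: $F(c)=\sum_x w_A(x)\,w_{\Delta_0}(x-c)$ is a cross-correlation rather than a convolution, but since you correctly identify the Fourier transform as (a scalar multiple of) $A(\zeta^k)\overline{\Delta_0(\zeta^k)}$, the conclusion is unaffected.
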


\begin{figure}
\centering
\includegraphics[scale=.2]{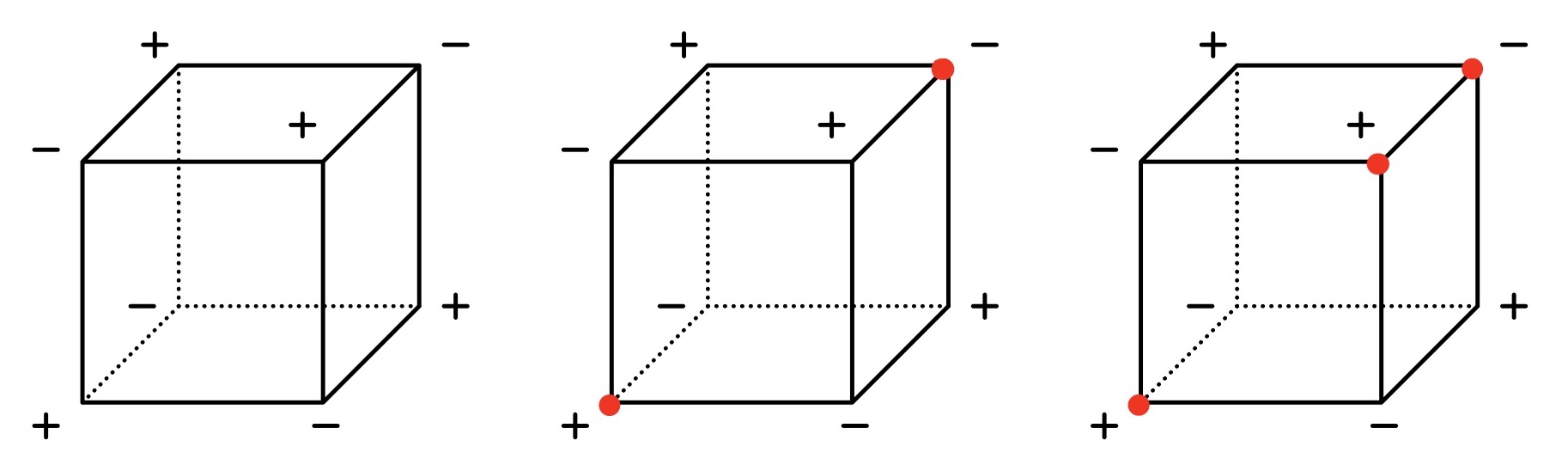}
\caption{Left:  A cuboid with $\pm 1$ weights labeled.  Middle and right:  Two cuboids, $\Delta$ and $\Delta'$, with points in $A$ labeled in red.  The middle cuboid is `balanced', as it satisfies $A[\Delta] = 0$.  The right cuboid is `unbalanced', with ${A}[\Delta'] > 0$.}
\end{figure}

Proposition \ref{cuboid} can be strengthened as follows if $M$ has only two distinct prime factors. This goes back to the work of de Bruijn \cite{deB}; see also \cite[Theorem 3.3]{LamLeung} for a self-contained proof, and \cite[Lemma 4.7]{LL2} for a statement in the language we use here.

\begin{lemma}\label{structure-thm} 
Let $A\in\calm^+(\ZZ_M)$.
Assume that $\Phi_{M}|A$, and that $M$ has at most two distinct prime factors $p_1,p_2$. Then $A$ is a linear combination of fibers with nonnegative weights. In other words,
$$A(X)=P_1(X)F_1(X)+ P_2(X)F_2(X) \mod X^M-1,$$
where $P_1,P_2$ are polynomials with nonnegative integer coefficients. If furthermore $M=p_1^{n_1}$ is a prime power, then the above holds with $P_2= 0$.

\end{lemma}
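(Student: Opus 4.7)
The plan is to split by the number of prime factors, treating the prime-power claim (where only $F_1$ is needed) by a direct computation, and then reducing the two-prime case to a classical theorem.

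First, for $M = p_1^{n_1}$: Proposition \ref{cuboid}(iii) already gives $A(X) \equiv P(X) F_1(X) \pmod{X^M - 1}$ for some $P \in \ZZ[X]$ whose coefficients may a priori be negative. The key observation is that multiplication by $F_1$ forces extra structure on $A$: after reducing mod $X^M-1$, the coefficient of $X^y$ in $P(X) F_1(X)$ is $\sum_{k=0}^{p_1-1} w_P(y - kM/p_1)$, which is invariant under $y \mapsto y + M/p_1$. Hence $w_A$ is constant on each coset of the subgroup $\langle M/p_1 \rangle = F_1 \subset \ZZ_M$. I would then set $P_1(X) := \sum_{r=0}^{M/p_1 - 1} w_A(r)\, X^r$; since $w_A \geq 0$, this $P_1$ lies in $\NN[X]$, and a direct expansion confirms $P_1(X) F_1(X) \equiv A(X) \pmod{X^M - 1}$ because every $y \in \ZZ_M$ has a unique representation $y = r + kM/p_1$ with $0 \leq r < M/p_1$ and $0 \leq k < p_1$, and by the coset-constancy $w_A(r) = w_A(y)$.

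For the two-prime case $M = p_1^{n_1} p_2^{n_2}$, I would invoke de Bruijn's classification of vanishing sums of $M$-th roots of unity with nonnegative integer coefficients \cite{deB}, for which a self-contained proof appears as \cite[Theorem 3.3]{LamLeung} and a fiber-theoretic restatement as \cite[Lemma 4.7]{LL2}. Translated into the present notation, this classification says exactly that every $A \in \calm^+$ with $\Phi_M(X) \, | \, A(X)$ is a nonnegative integer combination of translates of $F_1$ and $F_2$, i.e.\ $A(X) \equiv P_1(X) F_1(X) + P_2(X) F_2(X) \pmod{X^M - 1}$ with $P_1, P_2 \in \NN[X]$. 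Structurally, the classical proof runs by induction on the total mass $\sum_x w_A(x)$: one shows that any nonzero $A$ satisfying the hypotheses must contain some translate of $F_1$ or $F_2$ as a sub-multiset, subtracts it (noting $\Phi_M \, | \, F_i$, so the divisibility hypothesis persists), and iterates until $A = 0$.

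The main obstacle, if one wished to reprove the two-prime case from scratch, is precisely this fiber-containment step, and it is where the restriction to at most two primes is essential. Since the lemma is attributed to published sources, my proof would consist of the short direct prime-power calculation above together with a citation to \cite[Theorem 3.3]{LamLeung} (or equivalently \cite[Lemma 4.7]{LL2}) for the two-prime case.
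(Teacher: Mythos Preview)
Your proposal is correct and matches the paper's treatment: the paper does not prove this lemma at all but simply attributes it to de Bruijn \cite{deB}, Lam--Leung \cite[Theorem 3.3]{LamLeung}, and \cite[Lemma 4.7]{LL2}, exactly as you do for the two-prime case. Your added self-contained argument for the prime-power case (observing that $w_A$ must be constant on cosets of $\langle M/p_1\rangle$ and then reading off a nonnegative $P_1$) is a small bonus the paper does not include; it is correct and pleasant, though of course it also follows from the cited two-prime result by taking $P_2=0$.
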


%


\section{Proof of Theorem \ref{thm-lowdim}}\label{sec-proof-thm-lowdim}

Let $M=p_1^{n_1}p_2^{n_2}p_3^{n_3}$ with $n_1,n_2,n_3\in\NN\cup\{0\}$, where $p_1,p_2,p_3$ are distinct primes. We need to prove the following:

\begin{itemize}

\item [(i)] (CKP) holds for $M$, and (IKP1) holds for any tiling $A\oplus B=\ZZ_M$.

\item [(ii)] If either $n_3=0$ or $\max(n_1,n_2,n_3)\leq 2$, then (IKP2) holds for any tiling $A\oplus B=\ZZ_M$.
\end{itemize}

Assume first that $n_3=0$. By Lemma \ref{structure-thm}, if $A\subset\ZZ_M$ satisfies $\Phi_M|A$, then $A$ is a union of non-overlapping fibers in the $p_1$ and $p_2$ directions. This implies (CKP) in this case. Furthermore, let $A\oplus B=\ZZ_M$ be a tiling. 
By (\ref{mask-e2}), $\Phi_M(X)$ must divide at least one of $A(X)$ and $B(X)$, hence (IKP2) follows as well.

Assume now that 
\begin{equation*}
\min(n_1,n_2,n_3)\geq 1,
\end{equation*}
and that $A\subset\ZZ_M$ obeys $\Phi_M|A$. We prove (i) in this case.
By translational invariance, we may assume that $0\in A$. Let $\Lambda:= \Lambda(0,D(M))$, then $A\cap\Lambda\neq\emptyset$. 
If $M/p_i\in\Div(A)$ for all $i\in\{1,2,3\}$, we are done. 
Suppose now that there exists $i\in\{1,2,3\}$ such that $M/p_i\not\in\Div(A)$. 
By \cite[Proposition 6.1]{LL2}, $A\cap\Lambda$ must contain either a fiber in some direction or a {\it diagonal boxes} configuration defined in \cite[Definition 5.1]{LL2}. It is easy to see that either the fiber or at least one of the diagonal boxes must contain a pair of points $a,a'\in A\cap\Lambda$ with $(a-a',M)=M/p_i$. This proves (i).

Finally, assume that 
\begin{equation*}
\min(n_1,n_2,n_3)\geq 1\hbox{ and }\max(n_1,n_2,n_3)\leq 2.
\end{equation*}
Let $A\oplus B=\ZZ_M$ be a tiling.
Assume without loss of generality that $0\in A$ and $\Phi_M(X)|A(X)$, and define $\Lambda$ as above. We will prove that $A\cap\Lambda$ contains a fiber.

By \cite[Proposition 5.2]{LL2}, $A\cap\Lambda$ must either be a union of disjoint fibers, or else it must contain a diagonal boxes configuration.
If $A\cap\Lambda$ is a union of disjoint fibers, we are done. It remains to consider the case when $A\cap\Lambda$ contains diagonal boxes. Then $A\cap\Lambda$ has one of the structures described in \cite[Section 7]{LL2} (if $M$ is odd) or \cite[Section 9]{LL3} (if $M$ is odd). In each of these cases, an intermediate step in the proofs of the Coven-Meyerowitz conditions in \cite{LL2,LL3} is proving that $A$ must then contain fibers that do not lie in $A\cap\Lambda$.


\section{Proof of Theorem \ref{cuboidKeller}}\label{sec-proof-thm-cuboidKeller}

Let $M=p_1^{n_1}\dots p_d^{n_d}$, where $p_1,\dots,p_n$ are distinct primes and $n_1,\dots,n_d \in\NN$.
Assume further that 
\begin{equation}\label{largeprimes2}
p_i>2^{i-2}\hbox{ for all }i\in\{1,\dots,d\}.
\end{equation}
We need to prove that
(CKP) holds for $M$.

Assume for contradiction that (CKP) fails for some $M$ and $A$ as above. Let $d$ be the lowest number of prime factors of $M$ for which this can happen; by Lemma \ref{structure-thm}, we must have $d\geq 3$. Let $ A\subset \ZZ_M$ be the hypothetical counterexample, so that $A$ is nonempty and we have 
$p_i>2^{i-2}$ for all $i\in\{1,\dots,d\}$, but $M/p_i\not\in\Div(A)$ for all $i$.

By translational invariance, we may assume that $0\in A$. Let $M_d:= M/p_d^{n_d}$ and
$$A_0:=\{p_d^{-n_d}a:\ a\in A \cap \Lambda(0, M/p_1\dots p_{d-1})\}.
$$
We cannot have $\Phi_{M_d}(X)|A_0(X)$, since otherwise $A_0\subset \ZZ_{M_d}$ would be a counterexample with $(d-1)$ prime factors and we assumed that $d$ is minimal. By Proposition \ref{cuboid}, there exists a cuboid
\begin{equation}\label{delta-e1}
\Delta_0(X)= X^c\prod_{j=1}^{d-1}  (1-X^{r_jM/p_j}),
\end{equation}
 with $p_d^{n_d}|c$ and $(r_j,p_j)=1$ for all $j$, such that $A_0[\Delta_0]\neq 0$. 
 Without loss of generality, we may assume that $A_0[\Delta_0]> 0$.

Let
$$
\Delta_s(X)= X^{c+s M/p_d} \prod_{j=1}^{d-1}  (1-X^{r_jM/p_j}),\ s=1,2,\dots,p_d-1,
$$
so that $(\Delta_0-\Delta_s)(X)$ is a full-dimensional cuboid in $\ZZ_M$ for each $s=1,2,\dots,d-1$. For visualization purposes, we suggest representing $\ZZ_M$ as a $d$-dimensional lattice as in Section \ref{subsec-convert}, with the $p_d$ direction vertical and 
$\Lambda(0, M/p_1\dots p_{d-1})$ represented as a lattice in a $(d-1)$-dimensional horizontal plane. Then $\Delta_s$ are partial cuboids stacked above the partial cuboid $\Delta_0$.

By Proposition \ref{cuboid}, we have $A[\Delta_0]-A[\Delta_s]=0$. But
$A[\Delta_0]=A_0[\Delta_0]> 0$, so that $A[\Delta_s]>0$. 
It follows that, for each $s$, at least one of those vertices of $\Delta_s$ that are counted with $+$ sign must belong to $A$. 

Let $v_1,v_2,\dots,v_\ell$ be the vertices of $\Delta_0$ that are counted with $+$ sign; we have $\ell=2^{d-2}$, half of the total number of vertices of $\Delta_0$. 
For each $s\in\{0,1,\dots,p_d-1\}$, any vertex $u$ of $\Delta_s$ that is counted with $+$ sign must satisfy $(M/p_d)|u-v_j$ for some $j$ (geometrically, $u$ must lie on the vertical line through one of the points $v_1,\dots,v_\ell$). We must have at least $p_d$ points of $A$ at such vertices, and, since we assume that $M/p_d\not\in\Div(A)$, at most one such point can lie on the same line. Therefore $p_d\leq 2^{d-2}$, contradicting our assumption.

\begin{remark}\label{cuboid-remark2}
Let $A\oplus B=\ZZ_M$ be an integer cube tiling. By (\ref{mask-e2}), $\Phi_M$ must divide at least one of $A(X)$ and $B(X)$. If we had 
$\Phi_M(X)|B(X)$, then $B$ would satisfy the condition (\ref{id-3a}) of Proposition \ref{cuboid} with $N=M$. However, this is false, since any $M$-cuboid with one vertex in $B$ cannot have any other vertices in $B$. Therefore $\Phi_M|A$, and Theorem \ref{cuboidKeller} implies that in this case we must in fact have $M/p_i\in\Div(A)$ for some $i$.
\end{remark}



\section{Proof of Theorem \ref{easyKeller}}\label{sec-proof-thm-easyKeller}


Theorem \ref{easyKeller} is a consequence of the following more general result. Let $M=p_1^{n_1}\dots p_d^{n_d}$. For $x\in\ZZ_M$ and $i\in\{1,\dots,d\}$, we define
$$
\Pi_i(x):=\{y\in\ZZ_M:\ p_i^{n_i}|(x-y)\}.
$$
In the Chinese Remainder Theorem geometric representation, $\Pi_i(x)$ is the $(d-1)$-dimensional hyperplane passing through $x$ and perpendicular to the $i$-th cardinal direction. Let also 
$$
m_A=\min_{a\in A}|A\cap \Pi_d(a)|,
$$
and similarly for $B$. We will continue to write $M_i=M/p_i^{n_i}$.

\begin{theorem}\label{easyKeller2}
Let $A\oplus  B=\ZZ_M$ be a tiling. If
\begin{equation}\label{pd-large2}
p_d>\max (m_A,m_B),
\end{equation}
then (IKP1) holds for this tiling, with $M/p_d\in \Div(A)\cup\Div(B)$.
\end{theorem}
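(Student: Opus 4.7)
My plan is to proceed by contradiction: assume $M/p_d \notin \Div(A) \cup \Div(B)$ and deduce $m_A \geq p_d$ or $m_B \geq p_d$, contradicting the hypothesis. By Sands's criterion (Theorem~\ref{thm-sands}), the assumption $M/p_d \notin \Div(A)$ translates into the structural statement that, for every $a \in A$, the fiber $a + F_d$ meets $A$ only in $\{a\}$; equivalently, in each column of $\ZZ_M$ in the $p_d$-direction, the $d$-coordinates of $A$-points lie in pairwise distinct cosets of the unique order-$p_d$ subgroup $F' \subset \ZZ_{p_d^{n_d}}$, and the symmetric property holds for $B$. After translating, we may assume $0 \in A$ and $|A \cap \Pi_d(0)| = m_A$.

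Projecting to the $d$-coordinate via $\rho_d : \ZZ_M \to \ZZ_{p_d^{n_d}}$, set $\alpha(y) = |A \cap \Pi_d(y)|$ and $\beta(y) = |B \cap \Pi_d(y)|$. The tiling identity becomes the convolution $\alpha \ast \beta \equiv M_d \cdot \one \pmod{X^{p_d^{n_d}} - 1}$, so the irreducible polynomial $\Phi_{p_d^{n_d}}(X)$ divides $\alpha(X)\beta(X)$ in $\ZZ[X]/(X^{p_d^{n_d}} - 1)$ and hence divides at least one factor. Without loss of generality $\Phi_{p_d^{n_d}}(X) \mid \alpha(X)$; by the prime-power case of Lemma~\ref{structure-thm}, $\alpha$ is a nonnegative integer combination of translates of the fiber $F'$, so in particular $\alpha$ is constant on every $F'$-coset. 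Writing $\alpha(y) = c(\bar y)$ for $\bar y = y + F'$, we have $c(\bar 0) = m_A = \min\{c(\bar y) : c(\bar y) > 0\}$.

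The column constraint combined with this $F'$-constancy shows that the slab of $p_d$ slices indexed by $\bar 0$ contains exactly $p_d m_A$ points of $A$, distributed at most one per column, so $p_d m_A \leq M_d$. The main obstacle, and the substantive content of the argument, is promoting this inequality into the required lower bound $m_A \geq p_d$. My plan is to iterate the reduction: pushing the identity $\alpha \ast \beta = M_d \one$ forward along the quotient $\ZZ_{p_d^{n_d}} \twoheadrightarrow \ZZ_{p_d^{n_d-1}}$ yields the new convolution identity $c \ast \beta' = M_d \cdot \one$ on $\ZZ_{p_d^{n_d-1}}$, where $\beta'$ is the push-forward of $\beta$; this satisfies the same cyclotomic dichotomy. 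Iterating this descent all the way to the trivial group $\ZZ_1$, while tracking the attained minimum $m_A$ as an invariant (it remains the constant value on the appropriate sub-lattice at each level) and invoking the bilateral column constraint from $M/p_d \notin \Div(B)$ to control the concentration of $\beta$, should force $m_A \geq p_d$ in the case where $\Phi_{p_d^{n_d}}$ divides $\alpha$; if at some level the cyclotomic divisor switches from the $\alpha$-side to the $\beta$-side, the symmetric argument yields $m_B \geq p_d$ instead. The delicate step will be verifying that the column bounds accumulated across the reduction levels really do produce a factor of $p_d$ at the bottom of the descent, which is precisely where the hypothesis $M/p_d \notin \Div(A) \cup \Div(B)$ is used to its full strength.
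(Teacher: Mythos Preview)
Your approach has a genuine gap at precisely the point you flag as ``delicate.'' The descent you propose operates entirely on the projected data $\alpha,\beta:\ZZ_{p_d^{n_d}}\to\ZZ_{\geq 0}$ together with the slab bounds $\sum_{y\in\bar y}\alpha(y)\leq M_d$ and $\sum_{y\in\bar y}\beta(y)\leq M_d$ derived from the column constraints. But this information alone does not force $\max(m_A,m_B)\geq p_d$: take for instance $n_d\geq 2$, $M_d\geq p_d$, $\alpha\equiv 1$, and $\beta=(1,\,M_d-1,\,0,\dots,0)$. Then $\alpha\ast\beta=M_d\cdot\one$, every cyclotomic $\Phi_{p_d^k}$ divides $\alpha(X)$, all slab bounds hold, yet the minimum positive values of both $\alpha$ and $\beta$ equal $1$. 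So the projected constraints are insufficient; any argument that works must retain information from the full tiling in $\ZZ_M$, not just its $p_d$-coordinate shadow. A related issue: your claim that ``the symmetric argument yields $m_B\geq p_d$'' once the cyclotomic divisor switches to the $\beta$-side is not symmetric at all, since after a pushforward step you are dealing with $\beta'(\bar y)=\sum_{y\in\bar y}\beta(y)$, whose minimum positive value bears no direct relation to $m_B$. And even in the case where every $\Phi_{p_d^k}$ divides the $\alpha$-side (so that $\alpha$ is constant), the bottom of your descent only yields $m_A\cdot|B|=M_d$, which does not give $m_A\geq p_d$ without further input.

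The paper's proof avoids all of this by staying in $\ZZ_M$ and invoking a short \emph{splitting lemma} (Lemma~\ref{splitting}): writing $kM/p_d=a_k+b_k$ with $a_k\in A$, $b_k\in B$ for $k=0,\dots,p_d-1$, divisor exclusion forces either all $a_k\in\Pi_d(a_0)$ or all $b_k\in\Pi_d(b_0)$. After translating so that $a_0=b_0=0$ with $|A\cap\Pi_d(0)|=m_A$ and $|B\cap\Pi_d(0)|=m_B$, the hypothesis $p_d>m_A$ in the first case forces two of the $a_k$ to coincide, and then $b_k-b_{k'}=(k-k')M/p_d$ gives $M/p_d\in\Div(B)$ directly. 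The second case is genuinely symmetric. No projection, no descent, and no tracking of minima through quotients is required.
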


We will rely on the following special case of \cite[Lemma 4.5]{LL3}. We include the short proof for completeness.

\begin{lemma}\label{splitting} (Splitting for fibers, \cite[Lemma 4.5]{LL3})
Assume that $A\oplus B=\ZZ_M$ is a tiling, and let $i\in\{1,\dots,d\}$. Let $z_k=k M/p_i$ for $k=0,1,\dots,p_i-1$. Let $a_k\in A,b_k\in B$ satisfy $a_k+b_k=z_k$. Then
one of the following must hold:
\begin{enumerate}
\item[(i)] We have $a_1,\dots,a_{p_i-1}\in\Pi_i(a_0)$ and $p_i^{n_i-1}\parallel b_k-b_{k'}$ for $k\neq k'$.
\item[(ii)] We have $b_1,\dots,b_{p_i-1}\in \Pi_i(b_0)$ and $p_i^{n_i-1}\parallel a_k-a_{k'}$ for $k\neq k'$.
\end{enumerate}

\end{lemma}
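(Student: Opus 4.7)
The plan is to work in the Chinese Remainder Theorem decomposition $\ZZ_M \cong \ZZ_{p_i^{n_i}} \oplus \ZZ_{M_i}$ (with $M_i = M/p_i^{n_i}$) and reduce the lemma to a statement about the $i$-th coordinate. First I would write $a_k = (\alpha_k, A_k)$, $b_k = (\beta_k, B_k)$, and $z_k = (c_k, 0)$ in this decomposition; the second coordinate of $z_k$ vanishes because $M_i \mid M/p_i$, and a direct computation gives $c_k = k u p_i^{n_i - 1}$ in $\ZZ_{p_i^{n_i}}$ with $u = M_i \bmod p_i^{n_i}$ a unit. In particular, $p_i^{n_i - 1} \parallel (c_k - c_{k'})$ for every $k \neq k'$ in $\{0,\dots,p_i-1\}$. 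The relation $a_k + b_k = z_k$ then splits into $\alpha_k + \beta_k = c_k$ in $\ZZ_{p_i^{n_i}}$ and $A_k + B_k = 0$ in $\ZZ_{M_i}$; so $A_k - A_{k'} = -(B_k - B_{k'})$, and the $M_i$-parts of the divisors $(a_k - a_{k'}, M)$ and $(b_k - b_{k'}, M)$ agree.

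The heart of the argument is a pairwise dichotomy: for each $k \neq k'$, either $\alpha_k = \alpha_{k'}$ or $\beta_k = \beta_{k'}$ in $\ZZ_{p_i^{n_i}}$. Suppose for contradiction both differences are nonzero in $\ZZ_{p_i^{n_i}}$, with $p_i^{e_a} \parallel (\alpha_k - \alpha_{k'})$ and $p_i^{e_b} \parallel (\beta_k - \beta_{k'})$ for some $e_a, e_b \leq n_i - 1$. Then $a_k \neq a_{k'}$ and $b_k \neq b_{k'}$ in $\ZZ_M$, so by Theorem \ref{thm-sands} the divisors $(a_k - a_{k'}, M) = p_i^{e_a} D$ and $(b_k - b_{k'}, M) = p_i^{e_b} D$ (with common $M_i$-part $D$) are distinct, forcing $e_a \neq e_b$. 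But $(\alpha_k - \alpha_{k'}) + (\beta_k - \beta_{k'}) = c_k - c_{k'}$ has $p_i$-adic valuation exactly $n_i - 1$; since the summand valuations differ, the valuation of the sum equals $\min(e_a, e_b) = n_i - 1$, forcing the larger of $e_a, e_b$ to exceed $n_i - 1$ and contradicting $e_a, e_b \leq n_i - 1$. Moreover, whenever $\alpha_k = \alpha_{k'}$, the identity $\beta_k - \beta_{k'} = c_k - c_{k'}$ directly gives $p_i^{n_i - 1} \parallel (b_k - b_{k'})$, and symmetrically when $\beta_k = \beta_{k'}$.

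Finally, I would promote this pairwise statement to a global one. The case $p_i = 2$ involves only one pair and is immediate. For $p_i \geq 3$, partition $\{0, \dots, p_i - 1\}$ into equivalence classes under $k \sim k'$ iff $\alpha_k = \alpha_{k'}$. If there were at least two classes with some class of size $\geq 2$, picking $k_1 \neq k_2$ in that class and $k_3$ in a different class would yield $\beta_{k_1} = \beta_{k_3} = \beta_{k_2}$ by the cross-class alternative, contradicting $p_i^{n_i-1} \parallel (\beta_{k_1} - \beta_{k_2})$ established above. Hence either the partition has a single class (all $\alpha_k$ agree, so $a_k \in \Pi_i(a_0)$ for every $k$, giving conclusion (i)) or every class is a singleton (all $\alpha_k$ distinct, so the dichotomy forces all $\beta_k$ equal, giving conclusion (ii)).

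The main subtlety is the pairwise dichotomy: one must carefully split each tiling divisor into its $p_i$-part and $M_i$-part and observe that the $M_i$-parts genuinely coincide (a consequence of $A_k + B_k = 0$, which in turn reflects that $z_k$ lies on the fiber $F_i$). This is what lets the Sands condition bite in the $p_i$-coordinate alone; everything else is elementary $p$-adic bookkeeping together with a short uniformity argument.
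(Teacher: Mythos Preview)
Your proof is correct and follows essentially the same approach as the paper: both arguments observe that the $M_i$-parts of $a_k-a_{k'}$ and $b_k-b_{k'}$ coincide (your $A_k-A_{k'}=-(B_k-B_{k'})$ is the paper's $(a_k-a_{k'},M_i)=(b_k-b_{k'},M_i)$), then invoke Sands's divisor exclusion to force a dichotomy on the $p_i$-part, and finally upgrade the pairwise dichotomy to a global one. The only cosmetic difference is that the paper first translates so that $a_0=b_0=0$, which lets it phrase the dichotomy as ``case (a) vs.\ case (b) for each $k$'' and makes the uniformity step a one-line contradiction, whereas you keep all pairs symmetric and need the short equivalence-class argument at the end.
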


\begin{proof}
By translational invariance, we may assume that $a_0=b_0=0$. Fix $k\in\{1,\dots,p_i-1\}$. Then $a_k+b_k=M/p_i$, so that $(a_k,M_i)=(b_k,M_i)$. By divisor exclusion (\ref{e-sands}) we must have $(a_k,p_i^{n_i})\neq (b_k,p_i^{n_i})$, hence we are in one of the following two cases for each $k$:
\begin{itemize}
\item[(a)] $p_i^{n_i}|a_k$ and $p_i^{n_i-1}\parallel b_k$,
\item[(b)] $p_i^{n_i}|b_k$ and $p_i^{n_i-1}\parallel a_k$.
\end{itemize}
We now prove uniformity in $k$. Assume for contradiction that (a) holds for some $k$ and (b) holds for some $k'$. By the same argument as above, we have $M/p_i | (z_k-z_{k'})=(a_k-a_{k'})+(b_k-b_{k'})$, so that
$$
(a_k-a_{k'},M_i)=(b_k-b_{k'},M_i).
$$
But now we also have $(a_k-a_{k'},p_i^{n_i})=(b_k-b_{k'},p_i^{n_i})= p_i^{n_i-1}$. Hence $(a_k-a_{k'},M)=(b_k-b_{k'},M)$, contradicting (\ref{e-sands}). 

Assume now that (a) holds for all $k$. Then $a_1,\dots,a_{p_i-1}\in\Pi_i(0)$, and the second part of (i) follows from $a_k+b_k=kM/p_i$. Similarly, if (b) holds for all $k$, then (ii) follows.
\end{proof}

\begin{proof}[Proof of Theorem \ref{easyKeller2}]
Choose $a_0\in A$ and $b_0\in B$ so that $m_A=|A\cap \Pi_d(a_0)|$ and $m_B=|B\cap \Pi_d(b_0)|$. By translational invariance, we may assume that $a_0=b_0=0$. For $k=1,\dots, p_d-1$, let $a_k\in A,b_k\in B$ satisfy $a_k+b_k=kM/p_d$. Interchanging the sets $A$ and $B$ if necessary, we may further assume that the statement (i) of Lemma \ref{splitting} holds, so that $a_1,\dots,a_{p_d-1}\in\Pi_d(0)$. 

By (\ref{pd-large2}), at least two of the elements $a_0,a_1,\dots,a_{p_d-1}$ must coincide, so that $a_k=a_{k'}$ for some $k\neq k'$. But then $b_k-b_{k'}=kM/p_d - k'M/p_d = (k-k')M/p_d$, so that $M/p_d\in\Div(B)$ as claimed.
\end{proof}

Theorem \ref{easyKeller} now follows from Theorem \ref{easyKeller2} and Lemma \ref{plane-bound-lemma} below.

\begin{lemma}\label{plane-bound-lemma}(Plane bound; cf. \cite[Lemma 4.3]{LL2})
Let $A\oplus  B=\ZZ_M$ be a tiling. Assume that $|A|=m p_d^k$, where $m=(|A|,M_d)$ satisfies $(m,p_d)=1$. Then for any $x\in\ZZ_M$,
\begin{equation}\label{plane-bound}
|A\cap\Pi_d(x)|\leq m.
\end{equation}
A similar statement holds for $B$.
\end{lemma}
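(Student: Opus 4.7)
First, I will reformulate the lemma as a bound on the coefficients of a reduced mask polynomial. Let $K = p_d^{n_d}$, $q = p_d$, and $N = M_d$, so $M = KN$ with $(K, N) = 1$. Setting $\phi(X) := A(X) \bmod (X^K - 1)$ and $\psi(X) := B(X) \bmod (X^K - 1)$, both in $\NN[X]/(X^K - 1)$, the coefficient of $X^j$ in $\phi$ equals $|A \cap \Pi_d(x)|$ for any $x \equiv j \pmod{K}$. Writing $[K](X) := 1 + X + \cdots + X^{K-1}$, the tiling identity $A(X) B(X) \equiv [M](X) \pmod{X^M - 1}$ reduces (via $[M](X) \equiv N \cdot [K](X) \pmod{X^K - 1}$) to the key identity
\begin{equation*}
  \phi \cdot \psi \;=\; N \cdot [K] \quad \text{in } \ZZ[X]/(X^K - 1).
\end{equation*}
The lemma then amounts to showing $\max_j \phi(j) \leq m$.

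\medskip

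\noindent Second, I will establish the following generalized statement by induction on $n$: if $\phi, \psi \in \NN[X]/(X^{q^n} - 1)$ satisfy $\phi\psi = N \cdot [q^n]$ with $(N, q) = 1$, and $\phi(1) = mq^k$ with $(m, q) = 1$, then $\max_j \phi(j) \leq m$ (these conditions force $m \mid N$ and $0 \leq k \leq n$ automatically). The base $n = 0$ is immediate. For the inductive step with $n \geq 1$, the irreducible polynomial $\Phi_{q^n}$ divides $[q^n]$, hence divides either $\phi$ or $\psi$; applying Lemma \ref{structure-thm} in $\ZZ_{q^n}$ to whichever it divides will yield a factorization through the fiber polynomial $F_1(X) = 1 + X^{q^{n-1}} + \cdots + X^{(q-1)q^{n-1}}$. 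In Case~A, $\phi = F_1 \cdot P$ with $P \in \NN[X]/(X^{q^{n-1}} - 1)$; this forces $k \geq 1$, since $P(1) = mq^{k-1}$ must be integral and $(m, q) = 1$. In Case~B, $\psi = F_1 \cdot Q$ (with $\Phi_{q^n} \nmid \phi$); this forces $k \leq n - 1$, since $Q(1) = Nq^{n-k-1}/m$ must be integral and $(N/m, q) = 1$.

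\medskip

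\noindent The hard part will be the reduction to a smaller instance. Although $F_1$ is a zero-divisor in $\ZZ[X]/(X^{q^n} - 1)$, its annihilator is precisely the ideal $(X^{q^{n-1}} - 1)$, and one has the useful identity $[q^n] = F_1 \cdot [q^{n-1}]$. In Case~A, reducing $\phi\psi = N[q^n]$ modulo $X^{q^{n-1}} - 1$ yields $P \cdot \tilde\psi = N[q^{n-1}]$ in $\ZZ[X]/(X^{q^{n-1}} - 1)$, where $\tilde\psi := \psi \bmod (X^{q^{n-1}} - 1)$ lies in $\NN[X]$. The inductive hypothesis at level $n - 1$ then gives $\max P \leq m$, and since $\phi(j)$ equals the coefficient of $X^{j \bmod q^{n-1}}$ in $P$, we conclude $\max\phi = \max P \leq m$. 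In Case~B the symmetric reduction gives $\tilde\phi \cdot Q = N[q^{n-1}]$ with $\tilde\phi := \phi \bmod (X^{q^{n-1}} - 1)$, and the inductive hypothesis yields $\max\tilde\phi \leq m$; since $\tilde\phi(j') = \sum_{j \equiv j' \pmod{q^{n-1}}} \phi(j) \geq \phi(j)$ for every $j$, again $\max\phi \leq m$. The care required in handling $F_1$ as a zero-divisor -- working through its annihilator rather than attempting cancellation -- is the main technical point.
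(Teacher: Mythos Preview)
Your proof is correct, but it takes a different route from the paper's. The paper's argument is a four-line application of an external result: by \cite[Theorem B1]{CM} (the proof of (T1)), exactly $k$ of the cyclotomics $\Phi_{p_d},\dots,\Phi_{p_d^{n_d}}$ divide $A(X)$, say $\Phi_{p_d^{s_1}},\dots,\Phi_{p_d^{s_k}}$. Each such divisor forces exact equidistribution of $A$ among the $p_d$ cosets of $\Lambda(x,p_d^{s_j})$ inside $\Lambda(x,p_d^{s_j-1})$; iterating from $\Lambda(x,1)$ down to $\Lambda(x,p_d^{n_d})=\Pi_d(x)$ loses a factor of exactly $p_d$ at those $k$ levels and at most $1$ at the others, giving $|A\cap\Pi_d(x)|\le |A|/p_d^k=m$.

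Your approach instead recasts everything in $\ZZ[X]/(X^{p_d^{n_d}}-1)$, generalizes to an arbitrary nonnegative factorization $\phi\psi=N[q^n]$, and inducts on $n$, peeling off $\Phi_{q^n}$ from whichever factor it divides via Lemma~\ref{structure-thm} and descending to level $n-1$. This is more self-contained (you do not invoke \cite{CM}) and in effect re-derives the relevant piece of (T1) inside the induction; the generalization to non-tiling $\phi,\psi$ is exactly what is needed to make the inductive hypothesis close. The trade-off is length and indirection: the paper's proof is shorter because it outsources the counting of prime-power cyclotomic divisors, while yours is longer but stands on its own. Two small remarks: the clause ``with $\Phi_{q^n}\nmid\phi$'' in Case~B is not actually needed for the conclusion $k\le n-1$ (integrality of $Q(1)$ suffices), and when you assert $P\in\NN[X]/(X^{q^{n-1}}-1)$ you are implicitly replacing $P$ by its reduction mod $X^{q^{n-1}}-1$, which preserves nonnegativity since $F_1\cdot X^{q^{n-1}}\equiv F_1$ --- worth saying explicitly.
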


\begin{proof}
By \cite[Theorem B1]{CM}, there exist $0<s_1<s_2<\dots<s_k\leq n_d$ such that 
$$
\Phi_{p_d^{s_1}}(X)\dots  \Phi_{p_d^{s_k}}(X) \mid A(X).
$$
This implies that for each $j\in\{1,\dots,k\}$ and $x\in\ZZ_M$, we have
$$
|A\cap\Lambda(x,p_d^{s_j-1})|=p_d \, |A\cap\Lambda(x,p_d^{s_j})|.
$$
Iterating this, we get
$$
p_d^k\cdot  |A\cap\Pi_d(x)| \leq |A| = p_d^k m,
$$
and (\ref{plane-bound}) follows.
\end{proof}

In general, $m_A$ and $m_B$ could be significantly smaller than the upper bound in (\ref{plane-bound}): there could, for example, exist planes that contain only one element of $A$ or $B$.

If $A\oplus B=\ZZ_M$ is an integer cube tiling, the conclusion (ii) of Lemma \ref{splitting} is true for the simple geometric reason that a rectangular box has a fixed width in each direction. Moreover, for each $b\in B$ we have $|B\cap \Pi_d(b)|=p_1\dots p_{d-1}$, optimizing the bound in (\ref{plane-bound}). Theorem \ref{easyKeller2} then states that (IKP1) holds for integer cube tilings with 
\begin{equation}\label{pd-large3}
p_d>p_1\dots p_{d-1}.
\end{equation}Further improvements may be possible with additional assumptions. For example, we have the following.

\begin{lemma}\label{easyKeller3}
Let $A\oplus  B=\ZZ_M$ be an integer cube tiling, where $M=p_1^2\dots p_d^2$ and $p_1<p_2<\dots<p_d$ are distinct primes. If
\begin{equation}\label{pj-large}
p_j>p_2p_3\dots p_{j-1} \hbox{ for each }j\in\{3,\dots,d\},
\end{equation}
then (IKP1) holds for this tiling, with $M/p_j\in \Div(A)$ for some $j\in\{1,\dots,d\}$.
\end{lemma}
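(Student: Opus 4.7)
The plan is to proceed by induction on $d$, with base cases $d \leq 2$ following from Theorem~\ref{thm-lowdim}. For the inductive step with $d \geq 3$, assume for contradiction that $M/p_j \notin \Div(A)$ for all $j$.

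The key observation specific to integer cube tilings is that case~(i) of Lemma~\ref{splitting} is impossible in any direction $p_i$: for distinct $b,b' \in B$, the $i$-th lattice coordinates lie in $[p_i]$, so $(b-b',p_i^2) \in \{1, p_i^2\}$, never $p_i$. Hence case~(ii) of Lemma~\ref{splitting} always holds; if the $p_i$ elements $\tilde b_k$ it produces happen to collide for some $k \neq k'$, then $M/p_i \in \Div(A)$ and we are done. Applying case~(ii) in direction $p_d$ yields $p_d$ pairwise distinct elements $\tilde b_0,\ldots,\tilde b_{p_d-1} \in \Pi_d(b_0)\cap B \cong [p_1]\times\cdots\times[p_{d-1}]$.

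Next I would introduce the slab $E = \{x \in \ZZ_M : x_j \equiv 0 \pmod{p_j^2}, \ j=2,\ldots,d-1\}$ and the subset $A^* = \{a \in A : a_j \in \{0,p_j^2-1,\ldots,p_j^2-p_j+1\}, \ j=2,\ldots,d-1\}$. Partitioning $E$ as $\bigsqcup_{a \in A^*} ((a+B)\cap E)$, with each piece of size $p_1 p_d$, yields $|A^*| = p_1 p_d$. The $2$-dimensional projection $\pi \colon A^* \to [p_1^2]\times[p_d^2]$, $a \mapsto (a_1, a_d)$, is injective: a coincidence $\pi(a) = \pi(a')$ with $a \neq a'$ would force $a,a'$ to disagree on some middle coordinate $j \in \{2,\ldots,d-1\}$ while sharing coordinates~$1$ and~$d$, and the Keller prime (Lemma~\ref{keller-sands}) would then have to be some $p_i$ with $i \in \{2,\ldots,d-1\}$, but $a_i - a'_i \in (-p_i, p_i) \setminus \{0\}$ has $p_i$-adic valuation $0$, not $1$. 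Thus $\tilde A^* := \pi(A^*)$ has size $p_1 p_d$, inherits Keller's property from $A$, and is a $2$-dimensional integer cube tile by Lemma~\ref{keller-sands}.

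By Lemma~\ref{structure-thm}, the cube tile $\tilde A^*$ decomposes as a disjoint union of fibers, and the Diophantine equation $n_1 p_1 + n_d p_d = p_1 p_d$ with $n_1, n_d \geq 0$ forces either $n_d = 0$ (all $p_1$-fibers) or $n_1 = 0$ (all $p_d$-fibers). In the latter case, lifting a $p_d$-fiber to $A^*$ yields $p_d$ elements with constant first coordinate and varying $d$-th coordinate; pigeonhole on coordinates $2,\ldots,d-1$ (valid since $p_d > p_2\cdots p_{d-1}$) forces two of these lifts to share all middle coordinates, producing $(a - a', M) = M/p_d$ and the desired contradiction.

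The remaining case---where $\tilde A^*$ consists of $p_d$ disjoint $p_1$-fibers---is the main obstacle. To handle it, I would iterate the slab construction in direction $p_{d-1}$, forming a $3$-dimensional slab and a $3$-dim cube tile $\tilde A^{**} \subset [p_1^2]\times[p_{d-1}^2]\times[p_d^2]$ of size $p_1 p_{d-1} p_d$. Theorem~\ref{thm-lowdim}(ii) applied to this $3$-dim cube tile produces a fiber in some direction; if the fiber is in direction $p_{d-1}$ or $p_d$, pigeonhole on coordinates $2,\ldots,d-2$ (valid since both $p_{d-1}$ and $p_d$ exceed $p_2\cdots p_{d-2}$) produces the desired $M/p_{d-1}$ or $M/p_d$ in $\Div(A)$. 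The subcase of a fiber only in direction $p_1$ requires further iteration through the primes $p_{d-2}, p_{d-3}, \ldots, p_3$, each time using the corresponding hypothesis $p_j > p_2\cdots p_{j-1}$, with delicate combinatorial bookkeeping of which slab coincidences propagate through successive reductions; making this bookkeeping rigorous is the most technically demanding part of the argument.
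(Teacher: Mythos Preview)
Your 2-dimensional slab construction is sound, and the dichotomy for $\tilde A^*$ (all $p_d$-fibers vs.\ all $p_1$-fibers) together with the pigeonhole in the $p_d$-fiber branch is correct. The genuine gap is the $p_1$-fiber branch. The iteration you sketch requires, at stage $k$, finding a fiber in a $k$-dimensional projected cube tile; for $k=3$ you invoke Theorem~\ref{thm-lowdim}(ii), but for $k\geq 4$ you would need (IKP2) for $k$-dimensional cube tilings. Theorem~\ref{lowdim-thm-cube} supplies this only for $k\leq 6$, and Theorem~\ref{thm1} shows it is \emph{false} for $k\geq 8$. So once $d$ is large, your iteration reaches a stage at which the projected cube tile need not contain any fiber, and the argument collapses. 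Substituting the inductive (IKP1) for (IKP2) does not help either: (IKP1) hands you only two elements of the projection, far too few to run pigeonhole on the remaining $d-k$ middle coordinates. This is a structural obstruction, not a bookkeeping issue.

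The paper's route bypasses the iteration entirely. After the same minimal-$d$ setup, it first checks whether every $a(x)$ with $x\in\Lambda(0,M/p_1\cdots p_{d-1})$ lies in $\Pi_d(0)$; if so, $A\cap\Pi_d(0)$ tiles $\ZZ_{M/p_d^2}$ against $B\cap\Pi_d(0)$, and the inductive hypothesis applies directly. If not, there exist $j<d$ and adjacent $y,y'$ in that grid with $(y-y',M)=M/p_j$ and $a(y)\notin\Pi_d(a(y'))$. A single application of the plane-splitting result \cite[Lemma~4.7]{LL3} to the 2-plane $\Lambda(0,M/p_jp_d)$ then forces $a(z)\in\Pi_j(0)$ for every $z$ in that plane, in particular for each $z=kM/p_d$. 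Thus all $p_d$ elements $a_0,\dots,a_{p_d-1}$ already agree on the $j$-th coordinate, and the pigeonhole count on the corresponding $b_k\in B\cap\Pi_d(0)\cap\Pi_j(0)$ drops from $p_1\cdots p_{d-1}$ to $\prod_{i\neq j,d}p_i\leq p_2\cdots p_{d-1}<p_d$, forcing a collision $b_k=b_{k'}$ and hence $M/p_d\in\Div(A)$. The idea you are missing is that \emph{one} extra fixed coordinate suffices, and the 2-plane lemma delivers it in one shot, with no recursive slab machinery.
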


Unlike in (\ref{pd-large3}), we require (\ref{pj-large}) to hold for all $j\in\{3,\dots,d\}$; the payoff is that we lose the first prime factor from the right side of the inequality.

\begin{proof}
Assume towards contradiction that the lemma fails, and let $d$ be the smallest integer for which this happens. By Theorem \ref{lowdim-thm-cube}, we must have $d\geq 7$.
Translating $A$ if necessary, we may assume that $0\in A$. For each $x\in\ZZ_M$, we use $a(x)$ and $b(x)$ to denote the elements $a(x)\in A$ and $b(x)\in B$ that satisfy $a(x)+b(x)=x$.  

Let $\Lambda_0=\Lambda(0,M/p_1\dots p_{d-1})$. Suppose first that $p_d^2|a(x)$ for all $x\in\Lambda_0$. Let $A'=A\cap \Pi_d(0)$ and $B'=B\cap\Pi_d(0)$. Then $a(x)\in A'$ for all $x\in\Lambda_0$; furthermore, if $x,x'\in \Lambda_0$ are distinct, the sets $a(x)+B'$ and $a(x')+B'$ are disjoint. Hence $|A'|=p_1\dots p_d$, and $A'\oplus B'=\ZZ_{M/p_d^2}$ is an integer cube tiling that does not satisfy (IKP1), contradicting the minimality of $d$.

Assume now that there is $x'\in\Lambda_0$ such that $p_d^2\nmid a(x)$. Then there exist $j\in\{1,\dots,d-1\}$ and $y,y'\in \Lambda_0$ such that $(y-y',M)=M/p_j$ but $p_d^2\nmid a(y)-a(y')$. Translating $A$ again if necessary, we may assume that $y'=a(y')=0$.  

Let $\Lambda=\Lambda(0, M/p_jp_d)$ (a lattice in the 2-dimensional discrete plane passing through $0$, $y$, and $M/p_d$). By \cite[Lemma 4.7]{LL3} with $\alpha_i=\alpha_j=1$, at least one of the following holds:
\begin{equation}\label{plane-e1}
a(z)\in\Pi_d(0)\hbox{ for all }z\in\Lambda,
\end{equation}
\begin{equation}\label{plane-e2}
a(z)\in\Pi_j(0)\hbox{ for all }z\in\Lambda.
\end{equation}
But we have assumed that (\ref{plane-e1}) fails for $z=y$. Hence (\ref{plane-e2}) holds. 
Now the rest of the argument is as in Theorem \ref{easyKeller2}, but with the additional constraint that $p_j^2\mid a_i$ for all $i$.

Instead of invoking \cite[Lemma 4.7]{LL3}, we could have converted the integer cube tiling to a tiling of $\RR^d$ by a rectangular box (as we do in Section \ref{thm1-proof}), and then used the structure of tilings of a 2-dimensional plane by translates of a rectangle. This is equivalent to \cite[Lemma 4.7]{LL3} in this specific case, and yields the same conclusion.
\end{proof}


\section{Proof of Theorem \ref{thm1}}\label{thm1-proof}
\subsection{Setup} 
We will use the notation and conventions of Section \ref{integer-cubes}.
It suffices to prove that there exists a column-free tiling $T \oplus R = \R^d$ such that $T \subseteq \Z^d$ and $T$ is invariant under translations by elements of $\mathcal L_M$.  To construct this tiling, we will use an argument of Szab\'o \cite{Sz-Keller} to periodize the counterexample to (KP2) from \cite{Mackey}.  This procedure will not introduce any columns but could possibly introduce some shared faces.  The relevant result from \cite{Mackey} is the following:  There exists a tiling of $\R^8$ by unit cubes with centres in $\frac{1}{2}\Z^8$ such that no two cubes share an entire $7$-dimensional face.  Stacking this tiling, with appropriate half-integer offsets between adjacent layers, leads to an analogous tiling of $\R^d$ for every $d \geq 8$.  Thus, by rescaling, we have for each $d \geq 8$ a tiling $S \oplus R = \R^d$ with $S \subseteq \frac{p_1}{2}\Z \times \cdots \times \frac{p_d}{2}\Z$ that contains no shared faces.

We will need some notation and a lemma.  Given a set $S \subseteq \R^d$, define 
\begin{align*}
S_{i,a} := \{(s_1,\ldots,s_d) \in S \colon s_i = p_ia\} \quad\quad \text{for} \quad\quad i \in \{1,\ldots,d\},~a \in \frac{1}{2}\Z.
\end{align*}
If the projection of $S$ to the $i$th coordinate is contained in $\frac{p_i}{2}\Z$, then the sets $S_{i,a}$ (with $i$ fixed) form a partition of $S$.  We also define a shifted version of $S_{i,a}$, namely
\begin{align*}
S_{i,a}^* := \begin{cases}
S_{i,a} &\text{if } p_ia \in \Z,\\
S_{i,a}+\frac{1}{2}e_i &\text{if } p_ia \notin \Z.
\end{cases}
\end{align*}

\begin{lemma}\label{integer-difference-lem}
For $i \in \{1,\ldots,d\}$, let $\pi_i$ denote the projection $(x_1,\ldots,x_d) \mapsto (x_1,\ldots,\widehat{x_i},\ldots,x_d)$.  Suppose $S \oplus R = \R^d$ and that $s,s' \in S$ are such that $\pi_i(R+s)$ and $\pi_i(R+s')$ have nonempty intersection.  Then $s_i - s_i' \in p_i\Z$.
\end{lemma}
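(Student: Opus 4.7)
The plan is to reduce the statement to a one-dimensional tiling argument along a line parallel to the $i$-th coordinate axis. Fix a point $y=(y_1,\ldots,\widehat{y_i},\ldots,y_d)$ in the nonempty intersection $\pi_i(R+s)\cap\pi_i(R+s')$, and let
\begin{equation*}
L = \{x\in\R^d : x_j = y_j \text{ for all } j\neq i\}
\end{equation*}
be the line through any preimage of $y$ under $\pi_i$ parallel to $e_i$. By construction, $L$ meets both $R+s$ and $R+s'$, because $R$ is the product of intervals in each coordinate.

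Next I would slice the tiling $S\oplus R = \R^d$ by $L$. Identifying $L$ with $\R$ via the $i$-th coordinate, each $s''\in S$ with $y\in\pi_i(R+s'')$ contributes the interval $[s''_i,s''_i+p_i)$, and these intervals partition $\R$. Both $s$ and $s'$ appear among such $s''$, so $[s_i,s_i+p_i)$ and $[s'_i,s'_i+p_i)$ are two tiles of a one-dimensional tiling of $\R$ by translates of $[0,p_i)$. In any such one-dimensional tiling, all translates differ by integer multiples of the tile length, so $s_i - s'_i \in p_i\Z$.

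There is no real obstacle here: the lemma is essentially the observation that a cube tiling of $\R^d$ restricts along every axis-parallel line to a tiling of that line by intervals of fixed length. The only verifications required are that $L$ genuinely meets both $R+s$ and $R+s'$ (immediate from $y\in \pi_i(R+s)\cap \pi_i(R+s')$) and that the collection of slices $L\cap(R+s'')$ really is a tiling of $L$ (immediate from $S\oplus R = \R^d$).
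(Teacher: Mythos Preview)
Your proposal is correct and takes essentially the same approach as the paper: fix a point in the intersection of the projections, slice the tiling along the axis-parallel line through that point, and observe that the induced one-dimensional tiling by intervals of length $p_i$ forces $s_i - s_i' \in p_i\Z$. The paper's proof is slightly more terse, but the idea is identical.
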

\begin{proof}
Fix a point $x \in \pi_i(R+s) \cap \pi_i(R+s')$, and consider the line $\ell = \pi_i^{-1}(x) \subset \R^d$.  The tiling $S \oplus R = \R^d$ restricts to a tiling of $\ell$ by segments of length $p_i$.  The endpoints of these segments are just the points where $\ell$ enters one box in the tiling and exits another.  Since $\ell$ intersects $R+s$ and $R+s'$ and is parallel to $e_i$, it follows that $s_i - s_i'$ is an integer multiple of $p_i$. 
\end{proof}

We now begin the construction of the tiling $T \oplus R = \R^d$ described above.  This will be done inductively.  We claim that for each $j \in \{0,\ldots,d\}$, there exists a tiling $S_j \oplus R = \R^d$ satisfying the following properties:
\begin{enumerate}
\item[(i)] If $j \geq 1$, then $S_j \subseteq \Z^j \times \frac{p_{j+1}}{2}\Z \times \cdots \times \frac{p_d}{2}\Z$.

\item[(ii)] If $1 \leq i \leq j$, then $S_j$ is $p_i^2 e_i$-periodic.

\item[(iii)] If $1 \leq i \leq j$, then $S_j$ contains no columns in the $e_i$ direction; i.e.~there is no $s \in S_j$ such that $\{s + np_ie_i \colon n \in \Z\} \subseteq S_j$.

\item[(iv)] If $j < i \leq d$, then $S_j$ contains no shared faces in the $e_i$ direction; i.e.~$p_ie_i \notin S_j - S_j$.
\end{enumerate}
Once this is proved, the desired tiling $T \oplus R = \R^d$ is obtained by taking $T = S_d$.

As mentioned above, there exists a tiling $S \oplus R = \R^d$ with $S \subseteq \frac{p_1}{2}\Z \times \cdots \times \frac{p_d}{2}\Z$ that contains no shared faces; we set $S_0 := S$.  Properties (i)--(iv) hold for $S_0$ (although (i)--(iii) are vacuous in this case).  Suppose $j \geq 1$ and that the claim holds with $j-1$ in place of $j$.  Let
\begin{align*}
S_j := \bigcup_{n \in \Z}\bigcup_{a \in \frac{1}{2}\Z \cap [0,p_j)} (S_{j-1})_{j,a}^* + np_j^2 e_j.
\end{align*}
We will show that $S_j \oplus R = \R^d$ and that this tiling satisfies properties (i)--(iv).

\subsection{Proof that $S_j \oplus R = \R^d$} We begin by showing that the translates $R+s$ with $s \in S_j$ cover $\R^d$.  Fix $x \in \R^d$.  Since $S_{j-1} \oplus R = \R^d$, there exists $s \in S_{j-1}$ such that $x \in R+s$.  We consider two cases.  Suppose first that $s_j \in p_j\Z$.  Let $n \in \Z$ be such that
\begin{align}\label{p_j^2 translation}
np_j^2 \leq s_j \leq (n+1)p_j^2 - p_j.
\end{align}
Set $x' := x - np_j^2e_j$ and let $s' \in S_{j-1}$ be such that $x' \in R+s'$.
Clearly $x \in R+s'+np_j^2e_j$; we claim that $s'+np_j^2e_j \in S_j$.  This would show that $x \in R+S_j$.  Using \eqref{p_j^2 translation} and the fact that $x' \in R+s'$ and $x \in R+s$, we find that
\begin{align*}
s_j' &> x_j' - p_j = x_j-np_j^2 - p_j \geq s_j - np_j^2 - p_j \geq -p_j
\intertext{and}
s_j' &\leq x_j' = x_j - np_j^2 < s_j+p_j - np_j^2 \leq p_j^2.
\end{align*}
Since $x$ and $x'$ differ only in the $j$th coordinate, Lemma \ref{integer-difference-lem} implies that $s_j - s_j' \in p_j\Z$.  We have assumed that $s_j \in p_j\Z$, and so we must have $s_j' \in p_j\Z$.  Therefore, the strict upper and lower bounds for $s_j'$ (displayed above) imply that $s_j' \in p_j\Z \cap [0, p_j^2)$.  Consequently,
\begin{align*}
s' \in \bigcup_{a \in \Z \cap [0,p_j)} (S_{j-1})_{j,a} = \bigcup_{a \in \Z \cap [0,p_j)} (S_{j-1})_{j,a}^*, 
\end{align*}
from which it easily follows that $s' + np_j^2e_j \in S_j$.  This concludes the proof in the case where $s_j \in p_j\Z$.

Next suppose that $s_j \notin p_j\Z$.  The projection of $S_{j-1}$ to the $j$th coordinate is contained in $\frac{p_j}{2}\Z$; if $j = 1$ then this follows from the definition of $S_0$, while if $j > 1$ then this is due to property (i).  Therefore, we have $s_j \in \frac{p_j}{2}\Z \setminus p_j\Z$.  Let $t \in S_{j-1}$ be such that $x-\delta e_j \in R+t$, where $\delta := 0$ if $p_j = 2$ and $\delta := \frac{1}{2}$ if $p_j > 2$.  By Lemma \ref{integer-difference-lem}, we have that $t_j - s_j \in p_j\Z$, and thus $t_j \in \frac{p_j}{2}\Z \setminus p_j\Z$. Let $n \in \Z$ be such that
\begin{align*}
np_j^2 + \frac{p_j}{2} \leq t_j \leq (n+1)p_j^2 - \frac{p_j}{2},
\end{align*}
and let $t' \in S_{j-1}$ be such that $x-(\delta+np_j^2)e_j \in R+t'$.  Clearly $x \in R + t' + (\delta+np_j^2)e_j$; we claim that $t' + (\delta+np_j^2)e_j \in S_j$.  Arguing as in the first case, we find that $$-\frac{p_j}{2} < t_j' < p_j^2 + \frac{p_j}{2}.$$  However, $t_j' \in \frac{p_j}{2}\Z \setminus p_j\Z$ (by another application of Lemma \ref{integer-difference-lem}), so we must have $t_j' \in (\frac{p_j}{2}\Z \setminus p_j\Z) \cap [0,p_j^2)$.  This means that
\begin{align*}
t' \in \bigcup_{a \in (\frac{1}{2}\Z \setminus \Z) \cap [0,p_j)} (S_{j-1})_{j,a} \quad\quad\text{so that}\quad\quad t'+\delta e_j \in \bigcup_{a \in (\frac{1}{2}\Z \setminus \Z) \cap [0,p_j)} (S_{j-1})_{j,a}^*.
\end{align*}
It easily follows that $t' + (\delta+np_j^2)e_j \in S_j$, completing the proof in the case where $s_j \notin p_j\Z$.

To prove that $S_j \oplus R = \R^d$, it remains to show that the translates of $R$ by elements of $S_j$ are pairwise disjoint.  Fix $s,s' \in S_j$, and let $a,a' \in \frac{1}{2}\Z \cap [0,p_j)$ and $n,n' \in \Z$ be such that $s \in (S_{j-1})_{j,a}^*+np_j^2e_j$ and $s' \in (S_{j-1})_{j,a'}^*+n'p_j^2e_j$.  We consider two cases.  Suppose first that $p_j(a-a') \notin \Z$.  Then $s_j - s_j' \notin p_j\Z$, so by Lemma \ref{integer-difference-lem}, the projections $\pi_j(R+s)$ and $\pi_j(R+s')$ are disjoint.  This implies that $R+s$ and $R+s'$ are disjoint.  Next suppose that $p_j(a-a') \in \Z$.  Then $|s_j-s_j'| = p_j|a-a'+p_j(n-n')|$.  Assuming that $R+s$ and $R+s'$ intersect, this difference is strictly less than $p_j$, forcing $a' = a$ and $n'=n$.  Consequently, we have $s-s' \in (S_{j-1})_{j,a}^* - (S_{j-1})_{j,a}^* \subseteq S_{j-1} - S_{j-1}$, and since $S_{j-1} \oplus R = \R^d$ is a tiling, this implies that $s = s'$.  This completes the proof that $S_j \oplus R = \R^d$.

\subsection{Proof that $S_j$ satisfies properties (i)--(iv)} Next, we verify that $S_j$ satisfies properties (i)--(iv), beginning with property (i).  Let $U$ be the projection of $S_j$ to the $j$th coordinate.  By construction, $S_j$ is a union of subsets of $S_{j-1}$ that have been translated in the $e_j$ direction only.  Therefore, if $j = 1$, then $S_j \subseteq U \times \frac{p_2}{2}\Z \times \cdots \times \frac{p_d}{2}\Z$ by the definition of $S_0$, while if $j > 1$, then property (i) applied to $S_{j-1}$ implies that $S_j \subseteq \Z^{j-1} \times U \times \frac{p_{j+1}}{2}\Z \times \cdots \times \frac{p_d}{2}\Z$.  Now, observe that the projection of $(S_{j-1})_{j,a}^*$ to the $j$th coordinate is always contained in $\Z$; this is because $p_ja \notin \Z$ implies $p_ja + \frac{1}{2} \in \Z$ for $a \in \frac{1}{2}\Z$.  It follows that $U \subseteq \Z$.

We now turn to property (ii).  It is clear that $S_j$ is $p_j^2e_j$-periodic.  Suppose that $1 \leq i \leq j-1$.  Property (ii) applies to $S_{j-1}$; thus $S_{j-1}$ is $p_i^2e_i$-periodic.  From this it follows that each $(S_{j-1})_{j,a}^*$ is $p_i^2e_i$-periodic, and consequently the same is true of $S_j$.

Next we verify property (iii).  Suppose that $S_j$ contains a column in the $e_i$ direction.  If $i < j$, then the $p_j^2e_j$-periodicity of $S_j$ implies that $\bigcup_{a \in \frac{1}{2}\Z \cap [0,p_j)}(S_{j-1})_{j,a}^*$ contains a column in the $e_i$ direction.  The projections of the sets $(S_{j-1})_{j,a}^*$ to the $j$th coordinate are distinct; therefore, the column must be contained in a single $(S_{j-1})_{j,a}^*$.  But $(S_{j-1})_{j,a}^*$ is a subset of $S_{j-1}$, possibly shifted in the $e_j$ direction, and $S_{j-1}$ does not contain a column in the $e_i$ direction, by property (iii).  If $i = j$, then the column intersects $\bigcup_{a \in \frac{1}{2}\Z \cap [0,p_j)}(S_{j-1})_{j,a}^*$ in $p_j \geq 2$ consecutive points;  these points are either all contained in $S_{j-1}$ or all contained in $S_{j-1}+\frac{1}{2}e_j$.  Either way, it follows that $p_je_j \in S_{j-1}-S_{j-1}$.  This contradicts the fact that $S_{j-1}$ has no shared faces in the $e_j$ direction, by property (iv).

Finally, we check property (iv).  Suppose that $S_j$ contains a shared face in the $e_i$ direction, for some $i > j$.  Then by $p_j^2e_j$-periodicity, the set $\bigcup_{a \in \frac{1}{2}\Z \cap [0,p_j)} (S_{j-1})_{j,a}^*$ also contains a shared face in the $e_i$ direction.  Since the sets $(S_{j-1})_{j,a}^*$ project to distinct values in the $j$th coordinate, this shared face must occur within a single $(S_{j-1})_{j,a}^*$.  But this implies that $S_{j-1}$ contains a shared face in the $e_i$ direction, contradicting property (iv).  This completes the proof of the claim; therefore the proof of the theorem is complete.


\section{Integer Keller properties and the Coven-Meyerowitz conjecture}\label{sec-CM}

We conclude this paper by providing more details on the Coven-Meyerowitz conjecture and its relationship to our results here. Given a finite set $A\subset\NN\cup\{0\}$, 
let $S_A$ be the set of prime powers
$p^\alpha$ such that $\Phi_{p^\alpha}(X)$ divides $A(X)$. 
Coven and Meyerowitz \cite{CM} proposed the following tiling conditions: 

\smallskip
{\it (T1) $A(1)=\prod_{s\in S_A}\Phi_s(1)$,}

\smallskip
{\it (T2) if $s_1,\dots,s_k\in S_A$ are powers of distinct
primes, then $\Phi_{s_1\dots s_k}(X)$ divides $A(X)$.}

\noindent
The main results of \cite{CM} are:

\begin{itemize}

\item if $A$ satisfies (T1), (T2), then $A$ tiles $\ZZ$ by translations;

\item  if $A$ tiles $\ZZ$ by translations, then (T1) holds;

\item if $A$ tiles $\ZZ$ by translations, and if $|A|$ has at most two distinct prime factors,
then (T2) holds.
\end{itemize}

The statement that (T2) must hold for all finite tiles of $\ZZ$ has become known as the {\it Coven-Meyerowitz conjecture}. 
At the time of this writing, (T2) is known to hold for both tiles in any tiling of period $M=M'p_1\dots p_k$, where:
\begin{itemize}
\item $p_1,\dots,p_k$ are distinct primes not dividing $M'$,
\item either $M'$ has at most two distinct prime factors, or else $M'=p^2q^2r^2$, where $p,q,r$ are distinct primes.
\end{itemize}
The case when $M'$ has at most two distinct prime factors can be resolved by the methods developed in \cite{CM}. The statement above follows \cite[Corollary 6.2]{LL1}; see also \cite[Theorem 1.5]{M}, \cite[Proposition 4.1]{shi}, and the comments on \cite{Tao-blog}.
The proof combines the main results of \cite{CM} with the ``subgroup' reduction'', developed in \cite{CM} and stated in \cite[Theorem]{LL1} in the form needed here. 
More recently, {\L}aba and Londner \cite{LL2,LL3} proved that (T2) holds for both tiles in any tiling of period $M=p^2q^2r^2$, where $p,q,r$ are distinct primes. 
Together with the subgroup reduction, this also implies the above statement with 
$M'=p^2q^2r^2$.

We note an interpretation of (T2) in terms of tiling complements.  Coven and Meyerowitz \cite{CM} proved that if $A$ satisfies (T1) and (T2), then it admits a tiling of the form $A\oplus B^\flat =\ZZ_M$, where $M={\rm lcm}(S_A)$ and $B^\flat$ is an explicitly constructed and highly structured ``standard" tiling complement depending only on the prime power cyclotomic divisors of $A(X)$. Conversely, if a tile $A$ admits a tiling with a standard tiling complement, it satisfies (T2) \cite[Proposition 3.4]{LL1}.

As a special case, the set $B$ defined in (\ref{defineB}) is the standard tiling set with 
$$\Phi_{p_1}(X)\dots \Phi_{p_N}(X)| B(X).$$
It follows that both sets $A$ and $B$ in any integer cube tiling, no matter how badly behaved, must satisfy the Coven-Meyerowitz conditions. 
At the same time, examples such as those in Theorem \ref{thm1} show that the structure of integer tilings may be quite complicated even when (T2) is known in advance. 
They also shed light on the viability of certain approaches to proving (T2), and specifically, of the approach initiated in \cite{LL2,LL3}.

A high-level overview of the proof of (T2) in \cite{LL2,LL3} is as follows. Let $M=p^2q^2r^2$, where $p,q,r$ are distinct primes. Assume that $A\oplus B=\ZZ_M$ is a tiling, with $|A|=|B|=pqr$. Without loss of generality, we may assume that $\Phi_M(X)|A(X)$. Let $\Lambda:=pqr\ZZ_M$, and consider the sets $A_a:=A\cap(\Lambda+a)$ with $a\in A$. The proof now splits in two cases. If each set $A_a$ is a union of disjoint fibers in some direction (depending on $a$, but the same for the entire set $A_a$), this fibering property is used to split up the tiling into tilings of smaller groups. Assume now that there is an $a\in A$ such that $A_a$ is not a union of disjoint fibers in a fixed direction. In this ``unfibered" case, the authors are able to find fibers in $A$ that do not lie in $\Lambda+a$, then shift these fibers so that the original tiling $A\oplus B=\ZZ_M$ is replaced by a new tiling $A'\oplus B=\ZZ_M$ with a simpler structure. The procedure continues until the entire set $A$ is replaced by the subgroup coset $a+\Lambda$; at that point, (T2) for both $A$ and $B$ follows from \cite[Proposition 3.4]{LL1}.

Some of the methods and intermediate results of \cite{LL1,LL2,LL3} extend to integer tilings with more prime factors and/or more scales. However, Theorem \ref{thm1} shows that any approach to proving (T2) that is based on fiber shifting cannot lead to a full resolution of the conjecture for tilings with 8 or more prime factors. This also answers Question 2 in \cite[Section 9]{LL1} in the negative.

We also mention the possibility of proving partial results concerning either the structure or certain specific elements of the divisor sets. For example, Conjecture 9.1 in \cite{LL1} states the following:

\begin{quote}
Let $A\oplus B=\ZZ_M$ be a tiling. Let $p$ be a prime such that $p^n\parallel M$ and $\Phi_{p^n}|A$. Then $M/p\not\in\Div(B)$.
\end{quote}

This would follow from (T2), but might also be proved independently as a weaker result. By Theorem \ref{thm-sands}, if $A\oplus B=\ZZ_M$ then $\Div(A)\cap\Div(B)=\{M\}$. In the specific case of integer cube tilings, we always have $\Phi_{p^2}|A$ for each $p|M$. 
If there exist integer cube tilings that do not satisfy (IKP1), this would not disprove the above conjecture, but it would prove the weaker statement that $M/p\not\in\Div( A)$ is possible.


\section{Acknowledgement}

The first author was supported by NSERC Discovery Grants 22R80520 and GR010263.
The second author 
was supported by NSERC Discovery Grant 22R80520.



\bibliographystyle{amsplain}

\vfill

\noindent{\sc Department of Mathematics, UBC, Vancouver,
B.C. V6T 1Z2, Canada}

\noindent{\it benjamin.b.bruce@gmail.com, ilaba@math.ubc.ca}

\end{document}